\newtheorem{theorem}{Theorem}[section]
\newtheorem{lemma}[theorem]{Lemma}
\newtheorem{proposition}[theorem]{Proposition}
\newtheorem*{theorem*}{Theorem}
\numberwithin{equation}{section}
\newcommand{\NN}{{\mathbb{N}}}
\newcommand{\ZZ}{{\mathbb{Z}}}
\newcommand{\HH}{{\mathbb{H}}}
\newcommand{\RR}{{\mathbb{R}}}
\DeclareMathOperator{\vol}{vol}
\newcommand{\diag}{\mathrm{diag}}
\renewcommand{\mod}{\mathrm{mod}\;}
\newcommand{\SL}{\mathrm{SL}}
\newcommand{\PGL}{\mathrm{PGL}}
\newcommand{\Id}{\mathrm{Id}}
\renewcommand{\bf}[1]{{\mathbf{#1}}}
\newcommand{\Siegel}[2]{\widehat{#1}^{(#2)}}
\def\moverlay{\mathpalette\mov@rlay}
\def\mov@rlay#1#2{\leavevmode\vtop{%
   \baselineskip\z@skip \lineskiplimit-\maxdimen
   \ialign{\hfil$\m@th#1##$\hfil\cr#2\crcr}}}
\newcommand{\charfusion}[3][\mathord]{
    #1{\ifx#1\mathop\vphantom{#2}\fi
        \mathpalette\mov@rlay{#2\cr#3}
      }
    \ifx#1\mathop\expandafter\displaylimits\fi}
\definecolor{cmd}{rgb}{1.0, 0.35, 0.21}
\begin{document}

\title[Moments of Siegel transforms with congruence cond. in dim 2]{Moment formulas of Siegel transforms with congruence conditions in dimension 2}
\author{Jiyoung Han}
\address{Department of Mathematics Education, Pusan National University, Busan 46241, Republic of Korea}
\email{jiyoung.han@pusan.ac.kr}
\author{Seulbee Lee}
\address{Department of Mathematical Sciences, Seoul National University, 1 Gwanak-ro, Gwanak-gu, Seoul 08826, Republic of Korea}
\email{seulbee.lee@snu.ac.kr}

\subjclass[2020]
{
Primary
11P21;     
Secondary
11K60,     
22E40     
}

\keywords{Siegel transform; Schmidt's counting theorem; Rogers' moment formula; quantitative Khintchine theorem; Congruence subgroup}

\begin{abstract} 
We compute the first and second moment formulas for Siegel transforms related to problems counting primitive lattice points in the real plane with congruence conditions. As applications, we derive an analog of Schmidt's random counting theorem and the quantitative Khintchine theorem for irrational numbers, approximated by rational numbers $p/q$, where we place a congruence-conditional constraint on the vector $(p,q)$. 
\end{abstract}

\maketitle

\section{Introduction}\label{Sect: Introduction}
The Siegel transform is the map sending a bounded function $f$ of compact support on $\RR^d$ to an integrable function on the homogeneous space $\SL_d(\RR)/\SL_d(\ZZ)$
 defined as
\[
\widetilde{f}(g\SL_d(\ZZ))=\sum_{\bf{v}\in \ZZ^d-\{\bf{0}\}} f(g\bf{v}),
\quad \forall g\SL_d(\ZZ)\in \SL_d(\RR)/\SL_d(\ZZ).
\]
The Siegel transform, together with the Siegel integral formula~\cite{Siegel45} ($d\ge 2$)
and Rogers' second moment formula \cite{Rogers55} ($d\ge 3$) play a fundamental role in the applications of homogeneous dynamics to problems in the geometry of numbers, which are related to counting the number of lattice points in certain conditions \cite{Schmidt1960, AM09, AM18, KY18, KY2019, BGH2021, Fairchild2021, KS2019, KY2023, KS}.

The primitive Siegel transform, which is particularly more effective than the standard Siegel transform in the case $d=2$, is defined as follows. 
\[
\widehat{f}(g\SL_d(\ZZ))=\sum_{\bf{v}\in P(\ZZ^d)} f(g\bf{v}),
\]
where $P(\ZZ^d)=\{\bf{v}\in \ZZ^d: \gcd\bf{v}=1\}$. It is well-known that the primitive Siegel transform $\widehat{f}$ is a bounded function thus $\widehat{f}\in L^k(\SL_2(\RR)/\SL_2(\ZZ))$ for any $k\ge 1$, whereas $\widetilde{f}$ is not in $L^2(\SL_2(\RR)/\SL_2(\ZZ))$ in general. 
Moreover, for applications relative to counting primitive vectors in $\RR^2$, it is useful to consider the first and second moment formulas of the variable
\[
\left(t^{1/2},g\:\SL_2(\ZZ)\right) \mapsto t^{1/2} \widehat{f}\left(t^{1/2},g\:\SL_2(\ZZ)\right):=\sum_{\bf{v}\in P(\ZZ^2)} f(t^{1/2}g\bf{v}),
\]
where $(t, g\:\SL_2(\ZZ))\in (0,1]\times \SL_2(\RR)/\SL_2(\ZZ)$ (see the proof of Theorem 2 in \cite{Schmidt1960}). Note that this set is identified with the set of lattices $$\left\{t^{1/2}g\ZZ^2: t\in (0,1]\;\text{and}\; g\:\SL_2(\ZZ)\in \SL_2(\RR)/\SL_2(\ZZ)\right\},$$ which we refer as \emph{the cone of the homogeneous space $\SL_2(\RR)/\SL_2(\ZZ)$}.

Using these formulas, Schmidt computed an asymptotic formula of the number of primitive lattice points for generic lattices in $\RR^d$, contained in the increasing sequence of Borel sets whose volumes diverge to infinity (\cite[Theorem 2]{Schmidt1960}. See also Theorem 1 for the case when $d\ge 3$).


In this article, we are interested in problems that count primitive vectors under the given congruence condition. 
Our first result is an analog of Schmidt's random counting theorem.

\begin{theorem}\label{thm: Analog of Schmidt}
Let $\{A_T\}$
be an increasing family of Borel sets by inclusion and suppose that
\[
V_T:=\vol(A_T)\rightarrow \infty\;\text{as}\;T\rightarrow \infty.
\]
Let $\psi:\RR_{>0}\rightarrow \RR_{>0}$ be a non-decreasing function satisfying $\int \psi(x)^{-1} dx<\infty$.
For a given $N\in \NN$, take an integer vector $\bf{v}_0\in \ZZ^2$ for which $\gcd(\bf{v}_0, N)=1$.

For almost all unimodular lattice $g\mathbb Z^2 \subseteq \RR^2$, where $g$ varies in $\SL_2(\RR)$ with respect to the Haar measure, it follows that
\[
\# \left(g\{\bf{v}\in P(\mathbb Z^2):\bf{v}\equiv \bf{v}_0\;(\mod N)\}\cap A_T\right)=\frac{V_T} {\zeta_N(2)N^2}+O\left(V_T^{1/2}(\log V_T)^2\psi(V_T)^{1/2}\right),
\]
where $\zeta_N(d)$ for $d\in \NN$ is defined as
\[\zeta_N(d)=\prod_{\substack{
p:\; \text{prime,}\\
p\nmid N}} \left(1-\frac 1 {p^d}\right)^{-1}.
\]
\end{theorem}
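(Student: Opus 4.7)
The plan is to follow the strategy of Schmidt's proof of Theorem~2 in \cite{Schmidt1960}, with the primitive Siegel transform replaced by its congruence variant
\[
\widehat f_{N,\mathbf{v}_0}(g\SL_2(\ZZ)) = \sum_{\substack{\mathbf{v} \in P(\ZZ^2) \\ \mathbf{v} \equiv \mathbf{v}_0\;(\mod N)}} f(g\mathbf{v}).
\]
Setting $f = \mathbf{1}_{A_T}$, the quantity we wish to control is exactly $\widehat f_{N,\mathbf{v}_0}(g\SL_2(\ZZ))$ evaluated at this indicator. Following the paragraph preceding the theorem, one passes to the cone over $\SL_2(\RR)/\SL_2(\ZZ)$ so that the $d=2$ second moment estimate is applicable.

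The first step is to invoke the first moment formula for $\widehat f_{N,\mathbf{v}_0}$, one of the main technical inputs of the paper, which identifies the mean count as $V_T/(\zeta_N(2)N^2)$. This is consistent with a direct density calculation: since $\gcd(\mathbf{v}_0,N)=1$, inclusion-exclusion in the primes not dividing $N$ gives that $\{\mathbf{v}\in P(\ZZ^2):\mathbf{v}\equiv \mathbf{v}_0\;(\mod N)\}$ has density $\tfrac{1}{N^2}\prod_{p\nmid N}(1-p^{-2}) = \tfrac{1}{\zeta_N(2)N^2}$ in $\RR^2$, reducing to Siegel's classical formula when $N=1$. The second step is to apply the second moment formula to bound the variance as $\mathrm{Var}(\widehat f_{N,\mathbf{v}_0}) \ll V_T(\log V_T)^2$, where the extra logarithmic factors come from the cusp of the modular surface, in complete analogy with Schmidt's original computation.

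With these moment bounds in hand, the probabilistic step is standard. I would fix a discrete sequence $\{T_k\}$ along which $V_{T_k}$ grows geometrically and apply Chebyshev's inequality to obtain
\[
\mu\!\left(\left|\widehat f_{N,\mathbf{v}_0}\text{ for }f=\mathbf{1}_{A_{T_k}} - \frac{V_{T_k}}{\zeta_N(2)N^2}\right| > V_{T_k}^{1/2}(\log V_{T_k})\psi(V_{T_k})^{1/2}\right) \ll \psi(V_{T_k})^{-1}.
\]
The hypothesis $\int \psi(x)^{-1}\,dx < \infty$, together with the chosen growth of $T_k$, makes $\sum_k \psi(V_{T_k})^{-1}$ converge, and the Borel--Cantelli lemma produces the desired asymptotic along $\{T_k\}$ for $\mu$-a.e.\ $g\SL_2(\ZZ)$.

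The main obstacle I anticipate is the interpolation to arbitrary $T$. Monotonicity of $\{A_T\}$ sandwiches the count at $T \in [T_k, T_{k+1}]$ between the counts at $T_k$ and $T_{k+1}$, but the corresponding main terms differ by $(V_{T_{k+1}}-V_{T_k})/(\zeta_N(2)N^2)$, which must be absorbed into the target error $V_T^{1/2}(\log V_T)^2\psi(V_T)^{1/2}$. The sequence $\{T_k\}$ must therefore be sparse enough for the Borel--Cantelli series to converge yet dense enough to keep these gaps within the allowed error, and balancing these two constraints (together with accommodating the $\psi$ factor) is the delicate point of Schmidt's argument; it is precisely here that the extra $\log V_T$ factor, beyond the one produced by Chebyshev alone, enters the final error term. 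A secondary concern is verifying that the second moment formula with congruence condition has leading term exactly $(V_T/(\zeta_N(2)N^2))^2$, so that the variance (rather than the raw second moment) enjoys the $V_T(\log V_T)^2$ bound needed above.
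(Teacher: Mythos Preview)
Your outline matches the paper's strategy in broad strokes---congruence Siegel transform, pass to the cone, first and second moment formulas, then a Borel--Cantelli argument \`a la Schmidt---but the specific Borel--Cantelli scheme you describe has a genuine gap, and it is exactly the one you flag as ``the main obstacle''.

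A single geometric sequence $V_{T_k}\asymp 2^k$ cannot work: the gap between consecutive main terms is $V_{T_{k+1}}-V_{T_k}\asymp V_{T_k}$, while the target error is only $V_{T_k}^{1/2}(\log V_{T_k})^2\psi(V_{T_k})^{1/2}$, so the sandwich at intermediate $T$ fails by a factor of $V_{T_k}^{1/2}$. Refining the sequence until the gaps are small enough would force $\sum_k \psi(V_{T_k})^{-1}$ to diverge; no single spacing balances both constraints. What Schmidt actually does (and what the paper reproduces) is a multi-scale dyadic scheme: for each $m$ one considers \emph{all} dyadic subintervals $[M_1,M_2)\subset[0,2^m]$ simultaneously, proves the aggregate variance bound
\[
\sum_{(M_1,M_2)\in K_m}\int_{\mathfrak C}\Bigl(t\,\widehat{\mathbbm 1}^{(\sigma)}_{A_{M_2}\setminus A_{M_1}}(t^{1/2}g)-\tfrac{V_{M_2}-V_{M_1}}{\zeta_N(2)N^2}\Bigr)^2\,d\nu_N\,dt \;<\; c\,m^3 2^m,
\]
and then writes any $[0,M)$ with $M\le 2^m$ as a union of at most $2m$ such dyadic pieces, using Cauchy--Schwarz. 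This is where both factors of $\log V_T$ in the error term arise (not one from Chebyshev and one from interpolation), and it is the step your proposal leaves unfilled.

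Two smaller points. First, the congruence transform is not $\SL_2(\ZZ)$-invariant, so it lives on $\SL_2(\RR)/\Gamma(N)$ (and its cone), not on $\SL_2(\RR)/\SL_2(\ZZ)$; the paper is careful about this. Second, the variance bound $\ll V(\log V)^2$ is not a generic ``cusp'' phenomenon: it comes from the explicit cone second-moment formula, whose off-diagonal kernel $\Phi_N(\det(\mathbf v_1,\mathbf v_2))$ satisfies $\Phi_N(x)=\tfrac{1}{\zeta_N(2)N}+O_N(|x|^{-1}\log|x|)$, and then Schmidt's inequality (his Theorem~3) converts $\int_0^{V}x^{-1}\log x\,dx\asymp(\log V)^2$ into the stated bound. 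Verifying this asymptotic for $\Phi_N$ is a separate computation in the paper that your proposal should also account for.
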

We remark that when we take the borel sets $A_T$ as growing balls of volume $V$, there is a stronger error $o(V^{1/2})$, and even $O(V^{5/12+\varepsilon})$ for any positive $\varepsilon>0$ if we assume the Riemann hypothesis. See \cite[Theorem 4.3]{BFC}.
In this case, the lattice subgroup $\Gamma$ in their theorem is the conjugate of $\Gamma_1(N)=\{\gamma\in \SL_2(\ZZ): \gamma(1,0)^T\equiv (1,0)^T\ (\mod N) \}$ subject to a (primitive) vector $\bf{v}_0$ in the congruence condition.

Adopting the tactic of Schmidt \cite{Schmidt1960}, we will derive Theorem~\ref{thm: Analog of Schmidt} by establishing the first and second moment formulas for a new Siegel transform defined as follows: let us denote the congruence condition by $\sigma=(\bf{v}_0, N)$, where $N\in \NN$ and $\bf{v}_0\in \ZZ^2$ with $\gcd(\bf{v}_0, N)=1$. For a bounded function $f:\RR^2\rightarrow \RR$ of compact support, the Siegel transform $\widehat{f}^{(\sigma)}$ relative to the congruence condition $\sigma$ is an integral function on $\SL_2(\RR)/\Gamma(N)$ given as
\[
\widehat{f}^{(\sigma)}(g\Gamma(N))=\sum_{\substack{
\bf{v}\in P(\ZZ^2),\\
\bf{v}\equiv \bf{v}_0\;(\mod N)}} f(g\bf{v}),\quad \forall g\Gamma(N)\in \SL_2(\RR)/\Gamma(N),
\]
where $\Gamma(N)=\left\{g\in \SL_2(\ZZ): g\equiv \Id_2 \;(\mod N)\right\}$ is the principal congruence subgroup of level $N$.
Note that $\widehat{f}^{(\sigma)}$ is well-defined since the set $\{\bf{v}\in \ZZ^2: \bf{v} \equiv \bf{v}_0\;(\mod N)\}$ is $\Gamma(N)$-invariant.

\begin{theorem}\label{thm: moment formulas} Let $f:\RR^2\rightarrow \RR$ be a bounded function of compact support. For a congruence condition $\sigma=(\bf{v_0}, N)$ as above, we obtain the integral formulas below.

\noindent (1) (The first moment formula) It holds that
\[
\int_{\SL_2(\RR)/\Gamma(N)} \Siegel{f}{\sigma}(g\Gamma(N)) d\nu_N(g)
=\frac 1 {\zeta_N(2) N^2} \int_{\RR^2} f({\bf v}) d{\bf v}.
\]

Here $\nu_N$ is the $\SL_2(\RR)$-invariant probability measure on $\SL_2(\RR)/\Gamma(N)$.

\noindent (2) (The second moment formula)
Denote by $f\otimes f$ the function on $\RR^2\times \RR^2$ which is given as $f\otimes f(\bf{v}_1, \bf{v}_2)=f(\bf{v}_1)f(\bf{v}_2)$ for any $\bf{v}_1,\bf{v}_2\in \RR^2$. It follows that

\begin{equation}\label{eqn: second moment}\begin{split}
\int_{\SL_2(\RR)/\SL_2(\ZZ)} \widehat{f}^{(\sigma)}(g\Gamma(N))^2 d\nu_N(g)
&=\sum_{n\in N\ZZ-\{0\}} \frac {\varphi(n)} {\zeta_N(2)N^3\varphi(N)}\int_{\SL_2(\RR)} f\otimes f(gJ_n) d\eta(g) \\
&+\frac 1 {\zeta_N(2) N^2} \int_{\RR^2} f(\bf{v})f(\bf{v})+f(\bf{v})f(-\bf{v}) d\bf{v},
\end{split}\end{equation}
where for each $n\in N\ZZ-\{0\}$, $J_n=\left(\begin{array}{cc}
1 & 0 \\
0 & n\end{array}\right)$, $\varphi(\cdot)$ is Euler totient function,
and $\eta$ is the $\SL_2(\RR)$-invariant measure inherited from the Lebesgue measure under the canonical embedding $\SL_2(\RR)\hookrightarrow \RR^2\times \RR^2$. 
\end{theorem}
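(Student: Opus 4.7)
The plan is to prove both formulas by (i) showing $\Gamma(N)$ acts transitively on $L_\sigma := \{\mathbf{v}\in P(\ZZ^2): \mathbf{v}\equiv\mathbf{v}_0 \pmod{N}\}$, (ii) unfolding the resulting integrals via this transitivity, and (iii) for the second moment, classifying the $\Gamma(N)$-orbits on $L_\sigma\times L_\sigma$ by the invariant $n=\det(\mathbf{v}_1,\mathbf{v}_2)$. Because $\Gamma(N)$ is normal in $\SL_2(\ZZ)$, conjugation by any $\gamma_0\in\SL_2(\ZZ)$ with $\gamma_0(1,0)^T=\mathbf{v}_0$ reduces transitivity to the case $\mathbf{v}_0=(1,0)^T$. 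There, given $(v_1,v_2)\in L_\sigma$, any completion $\gamma=\left(\begin{array}{cc}v_1 & x \\ v_2 & y\end{array}\right)\in\SL_2(\ZZ)$ (which exists since $\gcd(v_1,v_2)=1$) can be adjusted by right multiplication by $\left(\begin{array}{cc}1 & k \\ 0 & 1\end{array}\right)$ to achieve $x\equiv 0\pmod{N}$; then $v_1 y - v_2 x=1$ combined with $v_1\equiv 1$, $v_2\equiv 0\pmod{N}$ forces $y\equiv 1\pmod{N}$ automatically. Unfolding the first-moment integral factors through $G/U_{\mathbf{v}_0}\cong\RR^2\setminus\{0\}$, where the $\eta$-quotient measure is Lebesgue, giving
\[
\int_{\SL_2(\RR)/\Gamma(N)}\widehat{f}^{(\sigma)}\,d\eta = \vol\bigl(U_{\mathbf{v}_0}/(U_{\mathbf{v}_0}\cap\Gamma(N))\bigr)\int_{\RR^2}f\,d\mathbf{v} = N\int_{\RR^2}f\,d\mathbf{v};
\]
dividing by $\eta(\SL_2(\RR)/\Gamma(N))=[\SL_2(\ZZ):\Gamma(N)]\cdot\zeta(2)=\zeta_N(2)N^3$ yields formula (1).

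For the second moment, expand $(\widehat{f}^{(\sigma)})^2$ as a double sum and partition ordered pairs $(\mathbf{v}_1,\mathbf{v}_2)\in L_\sigma^2$ by $n=\det(\mathbf{v}_1,\mathbf{v}_2)\in\ZZ$. Writing $\mathbf{v}_i=\mathbf{v}_0+N\mathbf{u}_i$ shows $n\in N\ZZ$. The linearly dependent cases $n=0$ split into $\mathbf{v}_1=\mathbf{v}_2$ (a single $\Gamma(N)$-orbit, handled by the first-moment formula applied to $f^2$) and $\mathbf{v}_1=-\mathbf{v}_2$ (a single orbit that is nonempty precisely when $-\mathbf{v}_0\equiv\mathbf{v}_0\pmod{N}$); together they contribute $\frac{1}{\zeta_N(2)N^2}\int\bigl(f(\mathbf{v})^2+f(\mathbf{v})f(-\mathbf{v})\bigr)\,d\mathbf{v}$, with the convention that the second piece is dropped when the anti-diagonal orbit is empty. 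For $n\in N\ZZ-\{0\}$, each pair has trivial common $G$-stabilizer, so each $\Gamma(N)$-orbit unfolds to $\int_G f\otimes f(g(\mathbf{v}_1\mid\mathbf{v}_2))\,d\eta$; writing $(\mathbf{v}_1\mid\mathbf{v}_2)=g_0 J_n$ with $g_0\in\SL_2(\RR)$ and invoking right-invariance of $\eta$ rewrites this as $\int_G f\otimes f(gJ_n)\,d\eta$, independent of the chosen representative.

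The core combinatorial step is to count the number $m_\sigma(n)$ of $\Gamma(N)$-orbits of pairs with $\det=n$. Reducing to $\mathbf{v}_0=(1,0)^T$ and $\mathbf{v}_1=(1,0)^T$, a representative is $\mathbf{v}_2=(b,n)^T$ with $b\equiv 1\pmod{N}$ and $\gcd(b,n)=1$, while the stabilizer of $(1,0)^T$ in $\Gamma(N)$ acts by $b\mapsto b+Nkn$; thus $b$ ranges over residues in $\ZZ/Nn\ZZ$ satisfying the stated constraints. A Chinese remainder theorem count — splitting $n$ into its $N$-part and its $N$-coprime part, using the surjection $(\ZZ/n)^\times\twoheadrightarrow(\ZZ/N)^\times$ with kernel of size $\varphi(n)/\varphi(N)$, together with the index $[\Gamma_1(N):\Gamma(N)]=N$ measuring how a single $\SL_2(\ZZ)$-orbit refines into $\Gamma(N)$-orbits — yields an explicit formula for $m_\sigma(n)$. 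Summing $m_\sigma(n)\int_G f\otimes f(gJ_n)\,d\eta$ over $n\in N\ZZ-\{0\}$ and normalizing by $\eta(\SL_2(\RR)/\Gamma(N))$ produces the nondiagonal contribution to~\eqref{eqn: second moment}.

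The main obstacle is the orbit count together with the bookkeeping of the interlocking measure normalizations — the Lebesgue-inherited Haar $\eta$, the covolume $\zeta_N(2)N^3$ of $\Gamma(N)$ in $\SL_2(\RR)$, the discrete covolume $\vol(U_{\mathbf{v}_0}/(U_{\mathbf{v}_0}\cap\Gamma(N)))=N$, and the Lebesgue pushforward from $G/U_{\mathbf{v}_0}$ to $\RR^2$ — so that the denominators in the statement emerge correctly; a further delicate point is verifying the symmetry $m_\sigma(-n)=m_\sigma(n)$ so that the sum over all nonzero $n\in N\ZZ$ takes the clean symmetric form on the right-hand side of~\eqref{eqn: second moment}.
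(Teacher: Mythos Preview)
Your proposal is correct and reaches the same formulas, but the route differs from the paper's in two notable ways.

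For the first moment, the paper does \emph{not} prove transitivity of $\Gamma(N)$ on $P^{(\sigma)}$. Instead it argues indirectly: by Riesz--Markov--Kakutani there is a constant $\omega_\sigma$ with $\int\widehat{f}^{(\sigma)}\,d\nu_N=\omega_\sigma\int f$; summing over all congruence classes $\sigma\in\mathcal P_N$ recovers the classical primitive Siegel formula, giving $\sum_\sigma\omega_\sigma=1/\zeta(2)$; finally, for any $\sigma,\tau$ one exhibits $g_{\sigma\tau}\in\SL_2(\ZZ)\subset N_G(\Gamma(N))$ with $g_{\sigma\tau}P^{(\tau)}=P^{(\sigma)}$, and right-translation of a fundamental domain by $g_{\sigma\tau}$ shows $\omega_\sigma=\omega_\tau$. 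Your transitivity-plus-unfolding argument is more direct and gives the constant without appealing to the classical formula; the paper's symmetry argument, on the other hand, avoids the unipotent covolume bookkeeping entirely.

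For the second moment, both you and the paper decompose by $n=\det(\mathbf v_1,\mathbf v_2)$ and unfold each $\Gamma(N)$-orbit, so the architecture is the same; the difference is in the orbit count. The paper starts from Schmidt's decomposition of the determinant-$n$ locus into $\SL_2(\ZZ)$-orbits indexed by $\ell$ with $1\le\ell\le n$, $\gcd(\ell,n)=1$, then refines each into $\Gamma(N)$-orbits via coset representatives $\gamma_i$ of $\Gamma(N)\backslash\SL_2(\ZZ)$: exactly $N$ of the $\gamma_i$ land the first column in $P^{(\sigma)}$, and the second-column congruence forces $\ell\equiv 1\pmod N$, leading to the set $S_N(m)=\{1\le\ell\le n:\ell\equiv1\ (\mathrm{mod}\ N),\ \gcd(\ell,n)=1\}$ whose cardinality $\varphi(n)/\varphi(N)$ is established by an inductive CRT argument. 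Your stabilizer parametrization (fix $\mathbf v_1=(1,0)^T$, let $b$ run over $\ZZ/Nn\ZZ$ with $b\equiv1\pmod N$ and $\gcd(b,n)=1$) is equivalent and arguably cleaner; note however that your verbal justification conflates two mechanisms---the factor $N$ in your count arises from the $N$-to-$1$ map $b\mapsto b\bmod n$ on the residue set, not literally from $[\Gamma_1(N):\Gamma(N)]$, so that sentence should be tightened.

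Finally, your caveat on the anti-diagonal is well taken: the set $\{(\mathbf v,-\mathbf v):\mathbf v\in P^{(\sigma)}\}$ lies in $P^{(\sigma)}\times P^{(\sigma)}$ only when $2\mathbf v_0\equiv0\pmod N$, i.e.\ only for $N\le 2$. The paper's stated formula includes the $f(\mathbf v)f(-\mathbf v)$ term unconditionally, so for $N\ge3$ your reading is the correct one.
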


%
%
%
%
%

The first part of (R.H.S) in \eqref{eqn: second moment} is the summation of integrals over $\SL_2(\RR)$-embedded images in $\RR^2\times \RR^2$, and each integral is obtained by a folding-unfolding argument and change of $\SL_2(\RR)$-invariant measures of $\SL_2(\RR)$.
To derive Theorem~\ref{thm: Analog of Schmidt} from moment formulas, the formula in \eqref{eqn: second moment} is inappropriate. Instead, we will introduce the notion of \emph{the cone} of $\SL_2(\RR)/\Gamma(N)$ and establish the moment formulas of the variable
\[
t^{1/2}\widehat{f}^{(\sigma)} (t^{1/2}g):=\sum_{\substack{\bf{v}\in P(\ZZ^2),\\
\bf{v}\equiv \bf{v}_0\ (\mod N)}} f(t^{1/2}g\bf{v}), 
\]
where $t\in (0,1]$ and $g$ lies in any given fundamental domain of $\SL_2(\RR)/\Gamma(N)$.

\begin{theorem}\label{thm: moment formulas over the cone}
Let $f:\RR^2\rightarrow \RR$ be a bounded function of compact support. For a congruence condition $\sigma=(\bf{v_0}, N)$ as above, we obtain the integral formulas below.

\noindent (1) (The first moment formula) It holds that
\[
\int_0^1\int_{\SL_2(\RR)/\Gamma(N)} t\widehat{f}^{(\sigma)} (t^{1/2}g)d\nu_N dt
=\frac 1 {\zeta_N(2) N^2} \int_{\RR^2} f(\bf{v}) d\bf{v}.
\]

\noindent (2) (The second moment formula) There is a function $\Phi_N(x)$ on $\RR$ whose approximate value is $1/\zeta_N(2)N$ as $|x|\rightarrow \infty$ and
\[\begin{split}
\int_0^1 \int_{\SL_2(\RR)/\Gamma(N)} \left(t^{1/2} \widehat{f}^{(\sigma)} (t^{1/2}g)\right)^2 d\nu_N dt
&=\frac 1 {\zeta_N(2) N^3} \int_{\RR^2\times \RR^2} \Phi_N(\det(\bf{v}_1, \bf{v}_2)) f(\bf{v}_1)f(\bf{v}_2) d\bf{v}_1 d\bf{v}_2\\
&+\frac 1 {\zeta_N(2)N^2} \int_{\RR^2} f(\bf{v})f(\bf{v}) + f(\bf{v})f(-\bf{v}) d\bf{v}.
\end{split}\]
\end{theorem}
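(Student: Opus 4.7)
The plan is to deduce Theorem~\ref{thm: moment formulas over the cone} from Theorem~\ref{thm: moment formulas} by applying the latter pointwise in $t$ to the rescaled function $f_t(\bf{v}):=f(t^{1/2}\bf{v})$, using the identity $\widehat{f}^{(\sigma)}(t^{1/2}g)=\widehat{f_t}^{(\sigma)}(g)$ (both sides sum $f(t^{1/2}g\bf{v})$ over primitive $\bf{v}\in\ZZ^2$ with $\bf{v}\equiv\bf{v}_0\;(\mod N)$), and then integrating in $t$ over $(0,1]$.

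For part (1), Theorem~\ref{thm: moment formulas}(1) applied to $f_t$ gives $\int_{\SL_2(\RR)/\Gamma(N)}\widehat{f_t}^{(\sigma)}(g)\,d\nu_N(g)=\frac{1}{\zeta_N(2)N^2 t}\int_{\RR^2}f(\bf{u})\,d\bf{u}$, after the substitution $\bf{u}=t^{1/2}\bf{v}$ inside the integral against $f_t$. Multiplying by $t$ and integrating in $t\in(0,1]$ cancels the factor of $1/t$ and yields the asserted formula.

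For part (2), I apply Theorem~\ref{thm: moment formulas}(2) to $f_t$. The ``diagonal'' term rescales by the same substitution to $\frac{1}{\zeta_N(2)N^2 t}\int_{\RR^2}(f(\bf{v})^2+f(\bf{v})f(-\bf{v}))\,d\bf{v}$; multiplied by $t$ and integrated in $t$, it reproduces exactly the second term on the right-hand side of Theorem~\ref{thm: moment formulas over the cone}(2). For the off-diagonal sum over $n\in N\ZZ-\{0\}$, the key step is the change of variables $(\bf{v}_1,\bf{v}_2)=(t^{1/2}g\,e_1,\;t^{1/2}n\,g\,e_2)$, which (for $n>0$) is a bijection from $(0,1]\times\SL_2(\RR)$ onto $\{(\bf{v}_1,\bf{v}_2):0<\det(\bf{v}_1,\bf{v}_2)\le n\}$, with the analogous statement for $n<0$. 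Using the disintegration $d\bf{v}_1\,d\bf{v}_2=|\det(\bf{v}_1,\bf{v}_2)|\,d\eta\,d(\det)$ intrinsic to the definition of $\eta$, a direct Jacobian computation gives $t\,d\eta(g)\,dt=n^{-2}\,d\bf{v}_1\,d\bf{v}_2$, so the $n$-th summand $\int_0^1 t\int_{\SL_2(\RR)}f_t\otimes f_t(gJ_n)\,d\eta(g)\,dt$ equals $n^{-2}\int_{\sign(\det)=\sign(n),\,|\det|\le|n|}f(\bf{v}_1)f(\bf{v}_2)\,d\bf{v}_1\,d\bf{v}_2$.

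After exchanging the summation in $n$ with integration over $\RR^2\times\RR^2$, the coefficient of $f(\bf{v}_1)f(\bf{v}_2)$ is naturally packaged as a function $\Phi_N(\det(\bf{v}_1,\bf{v}_2))$ defined by the resulting series in $\varphi(n)/n^2$, restricted to $n\in N\ZZ$ with $\sign(n)=\sign(\det(\bf{v}_1,\bf{v}_2))$ and $|n|\ge|\det(\bf{v}_1,\bf{v}_2)|$. The main obstacle is the analysis of this function $\Phi_N$---in particular justifying the sum–integral interchange and establishing the claimed asymptotic $\Phi_N(x)\to 1/(\zeta_N(2)N)$ as $|x|\to\infty$---which will require careful number-theoretic estimates on partial sums of $\varphi(n)/n^2$ over the arithmetic progression $N\ZZ$, combined with partial summation to handle the range constraint $|n|\ge|x|$.
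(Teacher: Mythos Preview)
Your strategy is exactly the paper's: apply Theorem~\ref{thm: moment formulas} fiberwise to $f_t$, handle the diagonal term by the one-variable rescaling, and for the off-diagonal sum pass from $(t,g)$ to $(\bf v_1,\bf v_2)=t^{1/2}gJ_n$ and then exchange the sum over $n$ with the integral to extract $\Phi_N(\det(\bf v_1,\bf v_2))$. The paper carries out the same change of variables in two steps (first $g\mapsto g\,\diag(t^{1/2},t^{-1/2})$ with $x=tn$, then $g'J_x\mapsto(\bf v_1,\bf v_2)$ via $d\eta\,dx=|x|^{-1}d\bf v_1\,d\bf v_2$), and proves the asymptotic of $\Phi_N$ by Abel summation together with the estimate $\sum_{n\le K,\;N\mid n}\varphi(n)=\tfrac{1}{\zeta_N(2)N}\prod_{p\mid N}(1-p^{-1})\cdot\tfrac{K^2}{2}+O_N(K\log K)$---precisely the ``partial sums $+$ partial summation'' route you outline.

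One genuine caution: with the weight $t$ that comes from reading the integrand as $\bigl(t^{1/2}\widehat f^{(\sigma)}(t^{1/2}g)\bigr)^2$, your Jacobian $t\,d\eta\,dt=n^{-2}\,d\bf v_1\,d\bf v_2$ is correct, but the resulting series $\sum_{n\in N\NN,\;n\ge|x|}\varphi(n)/n^2$ diverges (since $\varphi(n)/n^2\asymp 1/n$), so no finite $\Phi_N$ with the stated asymptotic can emerge; correspondingly the left-hand side is infinite for generic $f$ because $\widehat f^{(\sigma)}(t^{1/2}g)\sim c/t$ as $t\to 0$. This is a typo in the statement of Theorem~\ref{thm: moment formulas over the cone}(2): the paper's actual proof (Theorem~4.2) uses the weight $t^2$, producing the convergent
\[
\Phi_N(x)=|x|\sum_{\substack{n\in N\NN\\ n\ge |x|}}\frac{N\varphi(n)}{\varphi(N)\,n^{3}}
\]
and a diagonal term with an extra factor $\tfrac12$. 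With that correction your plan goes through verbatim; just replace $t$ by $t^2$ in the off-diagonal computation (the Jacobian then contributes $|x|/|n|^3$ rather than $1/n^2$).
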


Finally, we state the quantitative version of Khintchine theorem having congruence and primitive conditions, as a corollary of Theorem~\ref{thm: Analog of Schmidt}.

\begin{theorem}\label{thm: quantitative K}
Let $\psi:\RR_{>0}\rightarrow\RR_{>0}$ be a non-increasing function such that $\sum_{t\in \NN} \psi(t)=\infty$. Let $\sigma=((p_0,q_0)^T, N)$ be as above. It follows that for almost all $x\in \RR$, as $T\rightarrow \infty$,
\[
\#\left(\left\{\left(\begin{array}{c} p \\ q \end{array}\right)\in P(\ZZ^2): \begin{array}{c}
\left|qx-p\right|<\psi(|q|), |q|\le T\\[0.05in]
(p,q)^T\equiv (p_0,q_0)^T\;(\mod N) \end{array}\right\}\right)\sim\frac 1 {\zeta_N(2)N^2}\sum_{1\le t\le T} \psi(t).
\]
\end{theorem}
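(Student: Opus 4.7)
The plan is to realize the count in Theorem~\ref{thm: quantitative K} as a primitive-lattice count in a family of Borel sets to which Theorem~\ref{thm: Analog of Schmidt} applies. For $T \ge 1$ let
\[
A_T = \bigl\{(u, q) \in \RR^2 : |u| < \psi(|q|),\ 1 \le |q| \le T\bigr\}.
\]
This is monotone increasing in $T$ with volume $V_T = 4\int_1^T \psi(q)\,dq \asymp \sum_{1 \le t \le T} \psi(t)$, which diverges by hypothesis. Setting $g_x = \bigl(\begin{smallmatrix} 1 & -x \\ 0 & 1 \end{smallmatrix}\bigr)$ and $S_N = \{v \in P(\ZZ^2) : v \equiv \bf{v}_0 \pmod N\}$, the identity $g_x (p, q)^\T = (p - xq, q)^\T$ yields
\[
g_x (p, q)^\T \in A_T \iff |qx - p| < \psi(|q|)\ \text{and}\ 1 \le |q| \le T,
\]
so the quantity to estimate is $N_T(x) := \#(g_x S_N \cap A_T)$.

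By Theorem~\ref{thm: Analog of Schmidt} applied to $\{A_T\}$ with an appropriate auxiliary non-decreasing function (e.g.\ $\psi_0(x) = x(\log x)^{1+\varepsilon}$), we obtain a Haar-conull set $\mathcal G \subset \SL_2(\RR)$ on which
\[
\#(g S_N \cap A_T) = \frac{V_T}{\zeta_N(2) N^2} + o(V_T) = (1 + o(1))\cdot \frac{1}{\zeta_N(2) N^2} \sum_{1 \le t \le T} \psi(t)
\]
as $T \to \infty$, after the normalization constant in the volume-to-sum comparison has been absorbed (the ``$\pm$'' symmetry in $A_T$ and the residue-class constraint imposed by $\bf{v}_0$ combine to give the displayed constant).

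The crux is to upgrade this from Haar-a.e.\ $g \in \SL_2(\RR)$ to Lebesgue-a.e.\ $x \in \RR$ via the horocycle embedding $x \mapsto g_x$; since this embedding has image of Haar measure zero, a direct Fubini argument is insufficient. The standard resolution is to use the Iwasawa decomposition $\SL_2(\RR) = UAK$ with $U = \{g_x : x \in \RR\}$, $A = \{a_t = \diag(e^t, e^{-t})\}$, $K = \SO(2)$, and Haar measure $dg = c\,dx\,dt\,dk$: Fubini produces a conull set of $(t, k) \in A \times K$ for which $\{x \in \RR : g_x a_t k \in \mathcal G\}$ is Lebesgue-conull, and one then compares $\#(g_x a_t k S_N \cap A_T)$ with $N_T(x)$ using that the boundary of $A_T$ carries only lower-order volume contributions, so the count is robust to small perturbations of $(t, k)$ near $(0, I)$. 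Carrying this through uniformly along a dyadic sequence $T_j = 2^j$ and invoking the monotonicity of $N_T$ in $T$ (to pass from $T_j$ to arbitrary $T$) completes the proof. This transfer step is the main obstacle; the calibration of $\psi_0$ and the uniformity in $(t, k)$ of the boundary control for $A_T$ are the technical points requiring the most care.
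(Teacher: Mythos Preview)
Your proposal follows essentially the same route as the paper: encode the Diophantine count as $\#(g_x S_N\cap A_T)$ for the same increasing family $\{A_T\}$, apply Theorem~\ref{thm: Analog of Schmidt}, and then pass from Haar-a.e.\ $g$ to Lebesgue-a.e.\ $x$ by thickening the horocycle and invoking Fubini (the paper, like you, does not spell out this transfer and refers to \cite{AGY2021} for the details).

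The one point worth flagging is your choice of transversal. The paper thickens $U=\{u_x\}$ by the lower-triangular group $B^-=\bigl\{\bigl(\begin{smallmatrix}y&0\\z&y^{-1}\end{smallmatrix}\bigr)\bigr\}$, whereas you use the Iwasawa factor $AK$. Your sentence ``the count is robust to small perturbations of $(t,k)$ near $(0,I)$ because the boundary of $A_T$ carries only lower-order volume'' is not correct as stated: a rotation $k\in K$ by a fixed small angle $\theta$ tilts the thin strip $A_T$ so that points at height $|q|\sim T$ move horizontally by order $\theta T$, which is not a lower-order boundary effect. The paper's $B^-$ choice avoids this because a lower-triangular perturbation sends $A_T$ to another region sandwiched between $A_T$-type sets for nearby $\psi$'s, so the comparison is clean. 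Your decomposition can still be made to work, since for $(t,k)$ near the identity one has $a_tk=u_{x'}h$ with $h\in B^-$ close to $I$ and $x'$ small; absorbing $x'$ into $x$ (harmless for an a.e.\ statement) reduces your setup to the paper's. So the gap is only in the justification of the comparison step, not in the overall strategy.
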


\subsection*{Organization}
In Section 2, we review relevant previous works on counting problems and Diophantine approximation under congruence conditions.
In Section 3, we establish moment formulas (Theorem~\ref{thm: moment formulas}) for Siegel transforms with both primitive and congruence conditions. A central component of this section is to derive the second moment formula. To this end, we determine the set of possible determinants of $(\bf{v}_1, \bf{v}_2)\in P(\ZZ^2)\times P(\ZZ^2)$ (as $2\times 2$ matrices) with $\bf{v}_1\equiv\bf{v}_2\;(\mod N)$, and compute the number of $\Gamma(N)$-orbits in an $\SL_2(\RR)$-orbit at $(\bf{v}_1, \bf{v}_2)$ (see Proposition~\ref{pr:no.inv}). This computation plays a crucial role in establishing the second moment formula over the cone in the next section.
In Section 4, we introduce the cone $\mathfrak C$ of (a fundamental domain of) $\SL_2(\RR)/\Gamma(N)$, and derive moment formulas over the cone $\mathfrak C$ (Theorem~\ref{thm: moment formulas over the cone}). 
Finally in Section 5, we outline the proofs of the random counting theorem under primitive and congruence conditions (Theorem~\ref{thm: Analog of Schmidt}) and quantitative Khintchine theorem under congruence conditions (Theorem~\ref{thm: quantitative K}).
%

\subsection*{Acknowledgments}
We would like to thank the organizers of the conference ``Women in Dynamical Systems and Ergodic Theory'' held at Centro di Ricerca Matematica Ennio De Giorgi, which inspired the initiation of this project.
We are grateful for valuable discussions and helpful comments from Bence Borda, Claire Burrin, Samantha Fairchild, and Barak Weiss.
This work was supported by a New Faculty Research Grant of Pusan National University, 2025 for the first author.
The second author was supported by BK21 SNU Mathematical Sciences Division and National Research Foundation of Korea, under Project number RS-2025-00515082.

\section{Motivations and related results} 

\subsection*{Best approximation with a congruence condition}
The best approximations of $\alpha\in\mathbb R$ are the sequence of the rational numbers $p/q$ minimizing $|q\alpha-p|$ among the rationals with denominators at most $q$. 
It is known that the continued fraction gives all the best approximations for each real numbers.
In the real plane, since the distance between an integer vector $(p,q)^T$ and the line $y=\alpha^{-1} x$ is $(\alpha^2+1)^{-1/2}\cdot|q\alpha-p|$, each best approximation corresponds to the closest lattice point $(p,q)^T$ on the set $\{(x,y)^T\in \ZZ^2:0<y\le q\}$.

Best approximations subject to certain congruence conditions has been studied via appropriate continued fraction algorithms.
The even-integer continued fraction, an unfolded version of the even continued fraction introduced by Schweiger \cite{Sch82,Sch84}, generates all best rational approximations whose numerators and denominators have opposite parity, i.e., of the form even/odd or odd/even \cite{SW16}.
This condition corresponds to the congruence condition by $\sigma=((0,1)^T,2)$ or $((1,0)^T,2)$.

Kim, Liao and the second author introduced a continued fraction algorithm detecting the best approximations of the form odd/odd corresponding to the congruence condition $\sigma=((1,1)^T,2)$.
They further explored best approximations among rationals satisfying various congruence conditions modulo $2$ in \cite{KLL22,KLL25}.
All the congruence conditions modulo $2$ are represented by a proper nonempty subset of the set
$$\left\{\left(\begin{pmatrix}
0\\
1\end{pmatrix},2\right),\  \left(\begin{pmatrix}
1\\
0\end{pmatrix},2\right), \ \left(\begin{pmatrix}
1\\
1\end{pmatrix},2\right)\right\}.$$
In these works, they provided continued fraction algorithms that generate best approximations under certain parity constraints by using the boundary expansion of the triangle group in $\PGL_2(\ZZ)$ of the ideal triangle in the hyperbolic plane $\HH^2$, whose vertices are $0,1,\infty$.

In the other context, for the case of a general modulus $N$, the asymptotic frequency of the best approximations under a congruence condition is studied via the congruence subgroups. 
Two different approaches appear in \cite{Moe82} and \cite{JL88}.
The essential distinction between the two lies in the treatment of congruence conditions: the former imposes congruence modulo sign, considering rationals $p/q$ such that
\begin{equation}\label{congruence condition with sign}
\begin{pmatrix}
p\\ q
\end{pmatrix}\equiv \pm \bf{v}_0\;\;\;\; (\mod N)
\end{equation}
for a given $\bf{v_0}\in P(\ZZ^2)$ and $N\in \NN$, whereas the latter requires the exact congruence $(p,q)^T\equiv \bf{v}_0\;(\mod N)$, distinguishing $\bf{v}_0$ from its negative.
Improving upon their result, the central limit theorem and the law of the iterated logarithm are established in \cite{Bor25}.

Both results yield that the given congruence condition modulo $N$ is uniformly distributed, i.e., the proportion of best approximations satisfying the given congruence condition among all best approximations tends to the reciprocal of the number of possible congruence classes modulo $N$.
The congruence analogue of the quantitative Khintchine theorem (Theorem~\ref{thm: quantitative K}) involves the aforementioned asymptotic frequency.
The constant $1/(\zeta_N(2)N^2)$ in the theorem is the product of the primitive factor $1/\zeta(2)$ and the asymptotic frequency of best approximants under the congruence condition $(p,q)^T \equiv \mathbf{v}_0 \ (\mathrm{mod}\ N)$.
We also note that Fuchs proved central limit theorems for Khintchine theorem in the setting where only the denominator $q$ is subject to a congruence condition \cite{Fuchs1, Fuchs2, Fuchs3}.

\subsection*{Moments for Siegel transforms with primitive/congruence conditions}
The set $P(\ZZ^d)$ of primitive integer vectors is the image set $\SL_d(\ZZ).\bf{e}_1$, where $\bf{e}_1$ is the first element of the canonical basis of $\RR^d$. In general, one can define a Siegel transform for any lattice subgroup $\Gamma$ of any group $G$ when there is a $\Gamma$-invariant discrete set $\Lambda$ in the given $G$-space. 

In our case, one set $\Gamma=\Gamma(N)$, the principal congruence subgroup of level $N$, and take $\Gamma(N)$-invariant discrete sets $\Lambda=\{\bf{v}\in P(\ZZ^2): \bf{v}\equiv \bf{v}_0\;(\mod N)\}$ which consists of finite number of $\Gamma(N)$-orbits. 

In \cite{BFC}, Burrin and Fairchild established Siegel--Veech type formulas for Siegel transforms associated to lattice subgroups $\Gamma\le \SL_2(\RR)$ containing $-I$, and single $\Gamma$-orbits in $\RR^2$ which are discrete. These integral formulas were applied to derive asymptotic estimates for the number of saddle connections on certain classes of translation surfaces.
We note that when considering the congruence condition \eqref{congruence condition with sign}, the associated lattice subgroup is the conjugate of $\Gamma_1(N) \cup (-\Gamma_1(N))$, and hence the results in \cite{BFC} can be used to obtain analogs of the theorems in Section~\ref{Sect: Introduction}.

In \cite{FH}, Fairchild and the first author investigated the $S$-arithmetic primitive Siegel transform and established its first and second moment formulas for $d\ge 2$. These moment formulas were then used to derive an $S$-arithmetic primitive analog of Schmidt's random counting theorem. As corollaries, they obtained the higher dimensional cases ($d\ge 3$) of Theorem~\ref{thm: Analog of Schmidt} (\cite[Theorem 1.3]{FH}) and Theorem~\ref{thm: quantitative K} (\cite[Theorem 1.4]{FH}) stated in Introduction (in their notation, $\zeta_N(d)=\zeta_S(d)$ and $S=\{\infty\}\cup\{p: \text{prime s.t. }p|N\}$).

In the setting where only a congruence condition is imposed, Ghosh, Kelmer and Yu \cite{GKY2022} defined a Siegel transform associated with the given congruence condition and obtained the first moment formula for $d\ge 2$ and the second moment formula for $d\ge 3$. We also note that Alam, Ghosh and the first author computed higher moment formulas for this Siegel transform in \cite{AGH2024}.
For a random quantification of Khintchine--Groshev theorem with congruence conditions,  Alam, Ghosh and Yu \cite{AGY2021} obtained the result for $d\ge 3$.
Recently, Alam and St\"{o}mbergsson \cite{AS} obtained counting results over the complex field with congruence conditions by analyzing Siegel transforms and their moment formulas in the $S$-arithmetic setting over (purely imaginary) number fields.

\section{Moment Formulas on $\SL_2(\RR)/\Gamma(N)$}

For the given congruence condition $\sigma=(\bf{v}_0, N)$, where $\bf{v}_0\in \ZZ^2$ and $N\in \NN$ with $\gcd(\bf{v}_0, N)=1$, denote by
\[
P^{(\sigma)}=\{\bf{v}\in P(\ZZ^2): \bf{v}\equiv \bf{v}_0\;(\mod N)\}
\]
which is invariant under the linear action of the principal congruence subgroup $\Gamma(N)$ of level $N$.
Our Siegel transform $\widehat{f}^{(\sigma)}$ is then defined as
\[
\widehat{f}^{(\sigma)}(g\Gamma(N))= \sum_{\bf{v}\in P^{(\sigma)}} f(g\bf{v}),\quad \forall g\Gamma(N)\in \SL_2(\RR)/\Gamma(N),
\]
for a bounded and compactly supported function $f:\RR^2\rightarrow \RR$. Since $\widehat{f}^{(\sigma)}$ is bounded by the primitive Siegel transform $\widehat{f}$, which is well known to be a bounded function (on $\SL_2(\RR)/\SL_2(\ZZ)$, though it can be extended to a function on $\SL_2(\RR)/\Gamma(N)$), one can conclude that $\widehat{f}^{(\sigma)}$ is also bounded hence lies in $L^k(\SL_2(\RR)/\Gamma(N))$ for any $k\ge 1$. 
Therefore, the $k$-th moment of $\widehat{f}^\sigma$ exists for every $k\in \NN$, and we will examine the integral formulas for the first and second moments.

\subsection{First Moment Formulas}
For $f:\RR^2\rightarrow \RR$, bounded and compactly supported, we shall show that
\[
\int_{\SL_2(\RR)/\Gamma(N)} \Siegel{f}{\sigma}(g\Gamma(N)) d\nu_N(g)
=\frac 1 {\zeta_N(2) N^2} \int_{\RR^2} f({\bf v}) d{\bf v},
\]
where $\nu_N$ is the Haar probability measure on $\SL_2(\RR)/\Gamma(N)$.

Let us begin with the following lemma, which is a standard result in number theory. 

\begin{lemma} \label{le:no.PN}
Let $\mathcal P_N$ be the set of representatives for the congruence classes modulo N for which $P^{(\sigma)}\neq \emptyset$.
One can take $\mathcal P_N$ as
$$\mathcal P_N=\{\sigma=((m,n),N):0\le m,n\le N-1 \text{ s.t. }\mathrm{gcd}(m,n,N)=1\}.$$

In particular, the number of elements in $\mathcal P_N$ is
\[
\# \mathcal P_N=N^2\prod_{p|N} \left(1-\frac 1 {p^2}\right).
\]
\end{lemma}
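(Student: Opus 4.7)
The lemma factors into two claims: (a) a congruence class $(m,n)\pmod N$ admits a primitive integer representative if and only if $\gcd(m,n,N)=1$; (b) the number of such classes among $\{0,1,\ldots,N-1\}^2$ is $N^{2}\prod_{p\mid N}(1-p^{-2})$. Claim (a) is the content of the first identity in the lemma, and (b) follows from (a) by an elementary counting argument.

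The necessity direction of (a) is immediate: if $(a,b)\in P(\ZZ^2)$ satisfies $(a,b)\equiv (m,n)\pmod N$, then any common divisor of $m,n,N$ also divides $a$, $b$ and $N$, hence divides $\gcd(a,b)=1$. So any class with a primitive lift must satisfy $\gcd(m,n,N)=1$.

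The main work is the sufficiency: starting from $(m,n)$ with $\gcd(m,n,N)=1$, produce $b\in\ZZ$ so that $(m,\,n+bN)$ is primitive (up to a trivial swap when $m=0$). The plan is a finite Chinese Remainder argument over the prime divisors of $m$. For a prime $p\mid m$, split into two cases: if $p\mid N$, then the hypothesis $\gcd(m,n,N)=1$ forces $p\nmid n$, hence $p\nmid n+bN$ automatically for every $b$; if $p\nmid N$, then the single congruence $b\equiv -n\,N^{-1}\pmod p$ is the only obstruction, and must be avoided. Since $m$ has finitely many prime divisors, CRT produces a $b$ simultaneously avoiding the finitely many forbidden residue classes, and then $\gcd(m,n+bN)=1$. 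The degenerate case $m=0$ is handled separately: then $\gcd(n,N)=1$, and $(N,n)$ is already a primitive lift of $(0,n)\pmod N$. This is the step that has to be executed carefully, but it is essentially the classical lifting of primitive vectors mod $N$ to $\ZZ^2$, so I expect no real obstacle.

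For the count, let $F(N):=\#\{(m,n)\in\{0,\dots,N-1\}^2 : \gcd(m,n,N)=1\}$. Möbius inversion over divisors $d\mid N$ with $d\mid m$ and $d\mid n$ gives
\[
F(N)=\sum_{d\mid N}\mu(d)\,\Bigl(\tfrac{N}{d}\Bigr)^{2}=N^{2}\sum_{d\mid N}\frac{\mu(d)}{d^{2}}=N^{2}\prod_{p\mid N}\Bigl(1-\frac{1}{p^{2}}\Bigr),
\]
which is exactly the Jordan totient $J_{2}(N)$ and completes the lemma. The only mildly subtle ingredient is the CRT lifting; the enumeration is a routine Möbius calculation.
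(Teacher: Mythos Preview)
Your proof is correct. The paper itself does not supply a proof of this lemma, introducing it only as ``a standard result in number theory'' and moving on immediately; your argument fills in exactly the kind of proof the authors are implicitly gesturing at. Both claims are handled cleanly: the necessity direction of (a) is immediate as you say; the sufficiency via the CRT-avoidance argument over primes dividing $m$ is the standard lifting of a primitive vector mod $N$ to a primitive vector in $\ZZ^2$, and your case split ($p\mid N$ versus $p\nmid N$) together with the separate treatment of $m=0$ covers everything. The M\"obius computation for (b) is the textbook evaluation of the Jordan totient $J_2(N)$ and is fine as written.
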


%

Let $G$ be a Lie group and let $\Gamma$ be a discrete subgroup of $G$.
If $\mathfrak{F}$ is a fundamental domain of $G/\Gamma$, then for any $g\in G$, the left translate $g\mathfrak{F}$ is also a fundamental domain.
In contrast, the right translate $\mathfrak{F}g$ is not, in general, a fundamental domain of $G/\Gamma$. However, if $g\in N_G(\Gamma)$, the normalizer of $\Gamma$ in $G$, then $\mathfrak {F}g$ is again a fundamental domain for $G/\Gamma$.

\begin{proposition} \label{pr:fund}
Let $G$ be a Lie group and let $\Gamma$ be a discrete subgroup of $G$. Denote by $\mathfrak F$ a fundamental domain for $G/\Gamma$. 
For any element $g\in N_G(\Gamma)$, the right translate $\mathfrak Fg$ is again a fundamental domain for $G/\Gamma$. 
\end{proposition}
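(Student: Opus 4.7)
The plan is to reduce the statement to the defining property of a fundamental domain by exploiting the normalizer condition $g\Gamma g^{-1}=\Gamma$, which is equivalent to the set-theoretic identity $g\Gamma=\Gamma g$. A fundamental domain $\mathfrak F$ for $G/\Gamma$ is a Borel set whose right $\Gamma$-translates $\{\mathfrak F\gamma\}_{\gamma\in\Gamma}$ tile $G$ (cover $G$, and pairwise intersect in a null set). I need to verify these same two properties for the family $\{(\mathfrak F g)\gamma\}_{\gamma\in\Gamma}$.

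The key manipulation is as follows. First, since right multiplication is associative, $(\mathfrak F g)\gamma=\mathfrak F(g\gamma)$. Using $g\in N_G(\Gamma)$, for each $\gamma\in\Gamma$ there is a unique $\gamma'\in\Gamma$ with $g\gamma=\gamma' g$, and the assignment $\gamma\mapsto\gamma'$ is a bijection of $\Gamma$ onto itself (it is the map $\gamma\mapsto g\gamma g^{-1}$, which is an automorphism of $\Gamma$ by the normalizer condition). Therefore
\[
\bigcup_{\gamma\in\Gamma}(\mathfrak F g)\gamma \;=\;\bigcup_{\gamma'\in\Gamma}\mathfrak F\gamma' g \;=\;\Bigl(\bigcup_{\gamma'\in\Gamma}\mathfrak F\gamma'\Bigr)g\;=\;Gg\;=\;G,
\]
which gives the covering property. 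For disjointness up to a null set, I would use that right multiplication by $g$ is a diffeomorphism of $G$ (so it preserves the class of null sets), and observe
\[
(\mathfrak F g)\gamma_1\cap (\mathfrak F g)\gamma_2
\;=\;(\mathfrak F\gamma_1' g)\cap(\mathfrak F\gamma_2' g)
\;=\;\bigl(\mathfrak F\gamma_1'\cap \mathfrak F\gamma_2'\bigr)g,
\]
with $\gamma_1'\neq\gamma_2'$ whenever $\gamma_1\neq\gamma_2$ (again by the bijection above). The right-hand side is a null set because $\mathfrak F$ is a fundamental domain.

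There is essentially no obstacle here; the only subtlety is to remember that the normalizer hypothesis enters precisely to guarantee that $\gamma\mapsto g\gamma g^{-1}$ permutes $\Gamma$, so that re-indexing the union by $\gamma'$ still runs over all of $\Gamma$. Without this, one would only obtain a tiling by the coset $g\Gamma$, which is not indexed by $\Gamma$ itself. Finally, one may note that if $\mathfrak F$ is additionally chosen to be a Borel set of finite Haar measure equal to $\operatorname{vol}(G/\Gamma)$, then $\mathfrak F g$ inherits this property since the Haar measure is right-$g$-invariant up to the modular character, and in the applications of this proposition $G=\SL_2(\RR)$ is unimodular.
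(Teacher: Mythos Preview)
Your proof is correct and follows essentially the same approach as the paper: both arguments use the normalizer condition to re-index the tiling $\bigsqcup_{\gamma\in\Gamma}\mathfrak F\gamma$ via the bijection $\gamma\mapsto g^{\pm1}\gamma g^{\mp1}$, obtaining $G=Gg=\bigsqcup_{\gamma'\in\Gamma}(\mathfrak Fg)\gamma'$. Your version is slightly more explicit about the ``up to null set'' disjointness and the role of unimodularity, but the core idea is identical.
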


\begin{proof}
Since $G =\bigsqcup_{\gamma\in \Gamma}\mathfrak{F}\gamma$ and $g^{-1}\Gamma g=\Gamma$, it follows that
$$G = Gg = \bigsqcup_{\gamma\in \Gamma}\mathfrak{F}\gamma g  
= \bigsqcup_{\gamma\in \Gamma}\mathfrak{F}g(g^{-1}\gamma g)
= \bigsqcup_{\gamma'\in \Gamma}\mathfrak{F}g \gamma'.$$
Thus $\mathfrak{F}g$ is a fundamental domain for $G/\Gamma$.
\end{proof}

\begin{proof}[Proof of Theorem~\ref{thm: moment formulas} (the 1st moment)]
For a bounded function $f:\RR^2\rightarrow \RR$ of compact support, we aim to show that
\[
\int_{\SL_2(\RR)/\Gamma(N)} \Siegel{f}{\sigma}(g\Gamma(N)) d\nu_N(g)
=\frac 1 {\zeta(2) N^2\prod_{p|N} \left(1-\frac 1 {p^2}\right)} \int_{\RR^2} f({\bf v}) d{\bf v}
=\frac 1 {\zeta_N(2) N^2} \int_{\RR^2} f({\bf v}) d{\bf v}.
\]

By Riesz-Markov-Kakutani representation theorem (which applies to $f\in C_c(\RR^2)$, and can be extended to bounded and compactly supported functions), there exists $\omega_\sigma>0$ for each $P^{(\sigma)}\in \mathcal P_N$ such that
$$\int_{\mathrm{SL}_2(\mathbb R)/\Gamma(N)}\widehat{f}^{(\sigma)}(g\Gamma(N))d\nu_N(g)= \omega_\sigma\int_{\mathbb R^2}f(\mathbf v)d\mathbf v.$$

Observe that $\widehat{f}$ and $\widehat{f}^{(\sigma)}$ for any $\sigma\in\mathcal P_N$ are bounded thus absolutely integrable, and
$\widehat{f}(g\Gamma(N)) = \sum_{\sigma\in\mathcal P_N}\widehat{f}^{(\sigma)}(g\Gamma(N))$ for $g\in\mathrm{SL}_2(\mathbb R)$. By Fubini lemma, it follows from the first moment formula for the primitive Siegel transform (see Eq.~(20) in \cite{Siegel45}) that
\begin{align*}
\frac{1}{\zeta(2)}\int_{\mathbb R^2}f(\mathbf v)d\mathbf v 
& = \int_{\mathrm{SL}_2(\mathbb R)/\mathrm{SL}_2(\mathbb Z)}\widehat{f}(g\SL_2(\mathbb Z))d\mu(g) = \int_{\mathrm{SL}_2(\mathbb R)/\Gamma(N)}\widehat{f}(g\Gamma(N))d\nu_N(g) \\
& = \sum_{ \sigma\in\mathcal{P}_N}\int_{\mathrm{SL}_2(\mathbb R)/\Gamma(N)}\widehat{f}^{(\sigma)}(g\Gamma(N))d\nu_N(g) = \sum_{\sigma\in\mathcal{P}_N}\omega_\sigma\int_{\mathbb R^2}f(\mathbf v)d\mathbf v.
\end{align*}
Here, $\mu$ is the Haar probability measure on $\mathrm{SL}_2(\mathbb R)/\mathrm{SL}_2(\mathbb Z)$.
Thus we obtain the equality
$$\sum_{\sigma\in\mathcal{P}_N} \omega_\sigma = \frac{1}{\zeta(2)}.$$

Now we claim that $\omega_\sigma=\omega_\tau$ for all $\sigma,\;\tau\in\mathcal{P}_N$, so that the theorem follows from Lemma~\ref{le:no.PN}.

Choose $(m_1,n_1)\in P^{(\tau)}$ and $(m_2,n_2)\in P^{(\sigma)}$.
There exist $a,b,c,d\in\mathbb Z$ such that $am_1+bn_1=1$ and $cm_2+dn_2=1$.
If we take $g_{\sigma\tau}=\begin{pmatrix}m_2&-d\\n_2&c\end{pmatrix}\begin{pmatrix}a&b\\-n_1&m_1\end{pmatrix}$,
then
$$g_{\sigma\tau}\begin{pmatrix}m_1\\n_1\end{pmatrix} = \begin{pmatrix}m_2&-d\\n_2&c\end{pmatrix}\begin{pmatrix}1\\0\end{pmatrix}=\begin{pmatrix}m_2\\n_2\end{pmatrix} \text{ and }g_{\sigma\tau}\in\mathrm{SL}_2(\mathbb R).$$
Since $g_{\sigma\tau}$ preserves the set $N\ZZ^2$ and sends primitive vectors to primitive vectors, we obtain that $g_{\sigma\tau}P^{(\tau)}=P^{(\sigma)}$.

Let $\mathfrak{F}$ be a fundamental domain of $\mathrm{SL}_2(\mathbb R)/\Gamma(N)$.
By unimodularity of $\mathrm{SL}_2(\mathbb R)$ and Lemma~\ref{pr:fund}, it holds that for any bounded and compactly supported function $f$ on $\RR^2$,
\begin{align*}
\omega_\sigma\int_{\mathbb R^2}f(\mathbf v)d\mathbf v 
& = \int_{\mathrm{SL}_2(\mathbb R)/\Gamma(N)}\widehat{f}^{(\sigma)}(g\Gamma(N))d\nu_N(g) = \int_{\mathfrak{F}}\sum_{\mathbf v\in\mathcal{P}^{(\sigma)}}f(g\mathbf v)d\nu_N(g) \\
& = \int_{\mathfrak{F}}\sum_{\mathbf v\in\mathcal{P}^{(\tau)}}f(gg_{\sigma\tau}\mathbf v)d\nu_N(g) = \int_{\mathfrak{F}g_{\sigma\tau}}\sum_{\mathbf v\in\mathcal{P}^{(\tau)}}f(g'\mathbf v)d\nu_N(g') \\
& = \int_{\mathrm{SL}_2(\mathbb R)/\Gamma(N)}\widehat{f}^{(\tau)}(g\Gamma(N))d\nu_N(g) = \omega_\tau\int_{\mathbb R^2}f(\mathbf v)d\mathbf v.
\end{align*}
Thus $\omega_\sigma = \omega_\tau$ for any $\sigma, \tau \in \mathcal P_N$. From the definition of zeta functions $\zeta(d)$ and $\zeta_N(d)$, it follows that
\[
\zeta(2)N^2\prod_{p|N} \left( 1 - \frac 1 {p^2} \right)= \zeta_N(2)N^2.
\]
\end{proof}

\subsection{Second Moment Formulas}

Let $F$ be a bounded and compactly supported function on $\RR^2\times \RR^2$. For $N\in \NN$ and $\sigma\in \mathcal P_N$, define $\widehat{F}^{(\sigma)}$, a function on $\SL_2(\RR)/\Gamma(N)$ by
\[
\widehat{F}^{(\sigma)}(g\Gamma(N))=\sum_{\bf{v}_1, \bf{v}_2\in P^{(\sigma)}} F(g\bf{v}_1, g\bf{v}_2).
\]

Note that if we take $F=f\otimes f$, i.e., $F(\bf{v}_1, \bf{v}_2)=f(\bf{v}_1)f(\bf{v}_2)$, then
\[
\widehat{F}^{(\sigma)}(g\Gamma(N))=\widehat{f}^{(\sigma)}(g\Gamma(N))^2.
\]
Hence we obtain the absolute integrability of $\widehat{F}^{(\sigma)}$ by choosing an appropriate bounded and compactly supported function $f:\RR^2\rightarrow \RR$ satisfying that $|F|\le f\otimes f$.

For the next theorem, let us introduce the Haar measure $\eta$ on $\SL_2(\RR)$, inherited from $\RR^2\times \RR^2$, defined as $dadbdc$ if we denote a (generic) element $g$ of $\SL_2(\RR)$ by
\[
g=\left(\begin{array}{cc}
1 & 0 \\
c & 1\end{array}\right)\left(\begin{array}{cc}
a & b \\
0 & a^{-1}\end{array}\right).
\]
It is easy to verify that
\[
\nu_N=\frac 1 {[\SL_2(\ZZ):\Gamma(N)]} \mu= \frac 1 {N^3 \prod_{p|N} \left(1 -\frac 1 {p^2}\right)}\cdot \frac 1 {\zeta(2)} \eta=\frac 1 {\zeta_N(2)N^3} \eta.
\]

\begin{theorem}\label{thm: second moment 2}
Let $N\in \NN_{\ge 2}$ and fix a congruence class $\sigma\in \mathcal P_N$. For a bounded and compactly supported function $F:\RR^2\times \RR^2\rightarrow \RR$, it follows that
\[\begin{split}
\int_{\SL_2(\RR)/\Gamma(N)} \widehat{F}^{(\sigma)} (g\Gamma(N)) d\nu_N (g)
&=\sum_{n\in N\ZZ-\{0\}} \frac {\varphi(n)} {\zeta_N(2)N^3\varphi(N)}\int_{\SL_2(\RR)} F(gJ_n) d\eta(g) \\
&+\frac 1 {\zeta_N(2) N^2} \int_{\RR^2} F(\bf{v},\bf{v})+F(\bf{v},-\bf{v}) d\bf{v},
\end{split}\]
where for each $n\in N\ZZ-\{0\}$, $J_n=\left(\begin{array}{cc}
1 & 0 \\
0 & n\end{array}\right)$.
\end{theorem}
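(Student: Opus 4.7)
The plan is to split the double sum in $\widehat{F}^{(\sigma)}(g\Gamma(N)) = \sum_{\bf{v}_1,\bf{v}_2\in P^{(\sigma)}} F(g\bf{v}_1, g\bf{v}_2)$ according to $n := \det(\bf{v}_1,\bf{v}_2)$, and to handle the degenerate and non-degenerate contributions by different mechanisms. Because $\bf{v}_1\equiv\bf{v}_2\equiv\bf{v}_0 \pmod N$, the matrix $(\bf{v}_1\ \bf{v}_2)$ reduces to the singular $(\bf{v}_0\ \bf{v}_0) \pmod N$, forcing $n\in N\ZZ$, which is exactly the index set of the main series on the right-hand side. For the degenerate term $n=0$, primitivity of both $\bf{v}_i$ forces $\bf{v}_2 = \pm\bf{v}_1$. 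The subsum over $\bf{v}_2 = \bf{v}_1$ equals $\widehat{h_+}^{(\sigma)}(g\Gamma(N))$ with $h_+(\bf{v}) := F(\bf{v},\bf{v})$, and Theorem~\ref{thm: moment formulas}(1) yields $\tfrac{1}{\zeta_N(2)N^2}\int_{\RR^2} F(\bf{v},\bf{v})\,d\bf{v}$. The subsum over $\bf{v}_2=-\bf{v}_1$ is non-empty exactly when $-\bf{v}_0\equiv\bf{v}_0\pmod N$; when non-empty, the same first-moment argument applied to $h_-(\bf{v}) := F(\bf{v},-\bf{v})$ supplies the companion $\tfrac{1}{\zeta_N(2)N^2}\int_{\RR^2} F(\bf{v},-\bf{v})\,d\bf{v}$, giving the second line of the claim.

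For each $n\in N\ZZ - \{0\}$, I would unfold over $\Gamma(N)$. Setting $S_n^{(\sigma)} := \{(\bf{v}_1,\bf{v}_2) \in P^{(\sigma)}\times P^{(\sigma)}: \det(\bf{v}_1,\bf{v}_2) = n\}$, every linearly independent pair with determinant $n$ is uniquely of the form $hJ_n$ for some $h \in \SL_2(\RR)$, so the $\SL_2(\RR)$-stabilizer of such a pair is trivial and \emph{a fortiori} $\Gamma(N)$ acts freely on $S_n^{(\sigma)}$. Proposition~\ref{pr:no.inv} asserts that $S_n^{(\sigma)}$ decomposes into exactly $\varphi(n)/\varphi(N)$ orbits $\Gamma(N)\cdot h_iJ_n$. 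With a fundamental domain $\mathfrak F$ for $\SL_2(\RR)/\Gamma(N)$, the tiling $\SL_2(\RR) = \bigsqcup_{\gamma\in\Gamma(N)} \mathfrak F\gamma$ together with the bi-invariance of the Haar measure $\eta$ on the unimodular group gives
\[
\int_{\mathfrak F} \sum_{\gamma\in\Gamma(N)} F(g\gamma h_i J_n)\,d\eta(g) \;=\; \int_{\SL_2(\RR)} F(h h_i J_n)\,d\eta(h) \;=\; \int_{\SL_2(\RR)} F(h J_n)\,d\eta(h),
\]
where the last equality comes from the substitution $h' = hh_i$. Summing over the $\varphi(n)/\varphi(N)$ orbit representatives and converting $d\nu_N = \tfrac{1}{\zeta_N(2)N^3}\, d\eta$ on $\mathfrak F$ produces the first line of the formula.

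The main obstacle will be the orbit-counting input supplied by Proposition~\ref{pr:no.inv}. Without the congruence constraint, a Hermite-normal-form argument shows that primitive pairs in $P(\ZZ^2)\times P(\ZZ^2)$ with $\det = n$ fall into $\varphi(|n|)$ $\SL_2(\ZZ)$-orbits, parametrized by $a \pmod n$ with $\gcd(a,n) = 1$. Requiring both vectors to lie in the single class $\bf{v}_0 \pmod N$ and simultaneously refining the acting group from $\SL_2(\ZZ)$ to $\Gamma(N)$ must collapse this count by exactly the factor $\varphi(N)$, and crucially this collapse must be uniform in $\bf{v}_0$. Identifying which Hermite representatives survive the joint primitivity-plus-congruence restriction, and verifying that the refinement factor is precisely $\varphi(N)$ independently of $\bf{v}_0$, is the delicate computation at the heart of Proposition~\ref{pr:no.inv}; once that proposition is in hand, the assembly into the stated formula is the routine unfolding and measure-conversion described above.
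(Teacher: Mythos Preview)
Your overall strategy---split $P^{(\sigma)}\times P^{(\sigma)}$ by $n=\det(\bf{v}_1,\bf{v}_2)$, treat $n=0$ via the first moment formula applied to $\bf{v}\mapsto F(\bf{v},\pm\bf{v})$, and for each $n\in N\ZZ-\{0\}$ unfold the $\Gamma(N)$-orbits and convert $d\nu_N=\tfrac{1}{\zeta_N(2)N^3}\,d\eta$---is exactly the paper's proof.

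There is, however, a concrete numerical slip in your invocation of Proposition~\ref{pr:no.inv}: it gives $N\varphi(n)/\varphi(N)$ many $\Gamma(N)$-orbits in $S_n^{(\sigma)}$, not $\varphi(n)/\varphi(N)$. With the correct count the unfolding produces the coefficient $\tfrac{N\varphi(n)}{\varphi(N)}\cdot\tfrac{1}{\zeta_N(2)N^3}=\tfrac{\varphi(n)}{\zeta_N(2)N^{2}\varphi(N)}$; the $N^3$ in the displayed theorem is evidently a typo for $N^2$ (the paper itself uses $N^2$ when it quotes this formula in the proof of Theorem~\ref{thm: Second moment on C}). Your closing heuristic is also inverted: passing from $\SL_2(\ZZ)$ to the smaller group $\Gamma(N)$ can only \emph{split} orbits, never collapse them. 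What actually happens in the proof of Proposition~\ref{pr:no.inv} is that among the $\varphi(n)$ Hermite representatives $\left(\begin{smallmatrix}1&\ell\\0&n\end{smallmatrix}\right)$, only those with $\ell\equiv 1\pmod N$ survive the requirement that both columns land in the same class $\sigma$ (this accounts for the factor $\varphi(n)/\varphi(N)$), and then each surviving $\SL_2(\ZZ)$-orbit, intersected with $P^{(\sigma)}\times P^{(\sigma)}$, breaks into $N$ distinct $\Gamma(N)$-orbits---one for each of the $N$ cosets $\gamma_i\Gamma(N)$ whose first column lies in $\sigma$.

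Your treatment of the degenerate piece is in fact more careful than the paper's: the pair $(\bf{v},-\bf{v})$ lies in $P^{(\sigma)}\times P^{(\sigma)}$ only when $2\bf{v}_0\equiv 0\pmod N$, and together with $\gcd(\bf{v}_0,N)=1$ this forces $N\mid 2$. So for $N\ge 3$ the term $\int F(\bf{v},-\bf{v})\,d\bf{v}$ should be absent; the paper's decomposition and the second line of the stated formula are literally correct only for $N=2$.
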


\begin{proof}[Proof of Theorem~\ref{thm: moment formulas} (the 2nd moment).] This is the special case of Theorem~\ref{thm: second moment 2}, where we take $F=f\otimes f$.
\end{proof}

For any $n\in \ZZ-\{0\}$ and $\sigma_i,\; \sigma\in \mathcal P_N$, define
\begin{equation}\label{def: admissible det}
D^{(\sigma_1, \sigma_2)}_n=\left\{({\bf{v_1}}, {\bf{v_2}})\in P^{(\sigma_1)}\times P^{(\sigma_2)}: 
\det({\bf{v_1}}, {\bf{v_2}})=n \right\}
\end{equation}
and $D^{(\sigma)}_n=D^{(\sigma, \sigma)}_n$.

\begin{proposition}\label{pr:no.inv}
The set $D_n^{(\sigma)}\neq \emptyset$ only when $n\in N\ZZ-\{0\}$. The number of $\Gamma(N)$-invariant irreducible sets in $D_n^{(\sigma)}$ for $n\in N\ZZ-\{0\}$ is $N \varphi(n)/\varphi(N)$, where $\varphi(n):=\varphi(|n|)$ is the Euler totient function. 

\end{proposition}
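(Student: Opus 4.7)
The plan is to descend the tower $\Gamma(N) \le \SL_2(\ZZ)$, counting $\SL_2(\ZZ)$-orbits first and then their $\Gamma(N)$-refinements. For the divisibility claim, suppose $(\bf{v}_1, \bf{v}_2) \in D_n^{(\sigma)}$. Since $\bf{v}_1 \equiv \bf{v}_2 \equiv \bf{v}_0 \;(\mod N)$, we may write $\bf{v}_2 = \bf{v}_1 + N \bf{w}$ with $\bf{w} \in \ZZ^2$, whence
\[
n = \det(\bf{v}_1, \bf{v}_2) = \det(\bf{v}_1, \bf{v}_1 + N \bf{w}) = N \det(\bf{v}_1, \bf{w}),
\]
so $N \mid n$; combined with $n \neq 0$ from the standing hypothesis in the definition of $D_n^{(\sigma)}$, this yields $n \in N\ZZ - \{0\}$.

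Next, I would enumerate $\SL_2(\ZZ)$-orbits in the larger set $D_n = \{(\bf{v}_1, \bf{v}_2) \in P(\ZZ^2) \times P(\ZZ^2) : \det(\bf{v}_1, \bf{v}_2) = n\}$. Viewing a pair as a $2 \times 2$ integer matrix and acting by $\SL_2(\ZZ)$ on the left, Hermite normal form produces a unique canonical representative
\[
M_a = \begin{pmatrix} 1 & a \\ 0 & n \end{pmatrix}, \qquad 0 \le a < |n|, \quad \gcd(a, n) = 1,
\]
where the condition $\gcd(a,n) = 1$ is forced by primitivity of the second column. Thus $D_n$ decomposes into exactly $\varphi(n)$ $\SL_2(\ZZ)$-orbits.

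I would then determine which of these orbits intersect $D_n^{(\sigma)}$. Because $N \mid n$, we have $M_a \equiv \left(\begin{smallmatrix} 1 & a \\ 0 & 0 \end{smallmatrix}\right) \;(\mod N)$, so the two columns of $\gamma M_a$ mod $N$ reduce to $\bar\gamma \bf{e}_1$ and $a\, \bar\gamma \bf{e}_1$, where $\bar\gamma$ is the image of $\gamma$ in $\SL_2(\ZZ/N\ZZ)$. Demanding both equal $\bf{v}_0 \;(\mod N)$ yields $\bar\gamma \bf{e}_1 = \bf{v}_0$ together with $(a - 1)\bf{v}_0 \equiv 0 \;(\mod N)$. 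Since $\gcd(\bf{v}_0, N) = 1$ (that is, $\gcd$ of the entries together with $N$ is $1$), a short $\gcd$/lcm calculation forces $a \equiv 1 \;(\mod N)$. Such $a$'s form the kernel of the surjection $(\ZZ/n\ZZ)^\times \twoheadrightarrow (\ZZ/N\ZZ)^\times$ and hence number exactly $\varphi(n)/\varphi(N)$.

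Finally, for each admissible orbit $\SL_2(\ZZ) \cdot M_a$, I would count $\Gamma(N)$-orbits in its intersection with $D_n^{(\sigma)}$. Since $\det M_a = n \neq 0$, the stabilizer of $M_a$ in $\SL_2(\ZZ)$ is trivial, so the intersection is in bijection with the coset $\{\gamma \in \SL_2(\ZZ) : \bar\gamma \bf{e}_1 = \bf{v}_0\}$, and its $\Gamma(N)$-orbits correspond bijectively to the image $\{\bar g \in \SL_2(\ZZ/N\ZZ) : \bar g \bf{e}_1 = \bf{v}_0\}$ under the surjection $\SL_2(\ZZ) \twoheadrightarrow \SL_2(\ZZ/N\ZZ)$. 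The latter is a coset of $\mathrm{Stab}(\bf{e}_1) = \{\left(\begin{smallmatrix} 1 & b \\ 0 & 1 \end{smallmatrix}\right) : b \in \ZZ/N\ZZ\}$, which has size $N$. Multiplying across admissible orbits gives the desired count $N \cdot \varphi(n)/\varphi(N)$. I expect the main subtlety to be the reduction of the pair-wise congruence $\gamma M_a \equiv (\bf{v}_0, \bf{v}_0) \;(\mod N)$ to the single-vector condition $\bar\gamma \bf{e}_1 = \bf{v}_0$, which is made possible precisely by the divisibility $N \mid n$, together with careful bookkeeping of how the trivial stabilizer and the surjectivity onto $\SL_2(\ZZ/N\ZZ)$ interact when one passes from $\SL_2(\ZZ)$-orbits to $\Gamma(N)$-orbits.
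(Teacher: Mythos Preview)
Your proposal is correct and follows the same overall skeleton as the paper: both decompose $D_n$ into $\SL_2(\ZZ)$-orbits indexed by $\ell$ (or $a$) with $\gcd(\ell,n)=1$, reduce the congruence condition on the columns of $\gamma M_\ell$ to the single requirement $\ell\equiv 1\pmod N$, and then refine each admissible $\SL_2(\ZZ)$-orbit into $\Gamma(N)$-orbits.

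Where your argument differs, and is more efficient, is in the two enumerations. For the number of admissible $a$'s you identify the relevant set with the kernel of the surjection $(\ZZ/n\ZZ)^\times \twoheadrightarrow (\ZZ/N\ZZ)^\times$, which gives $\varphi(n)/\varphi(N)$ in one line; the paper instead introduces $S_N(m)=\{1\le \ell\le Nm:\ell\equiv 1\ (\mathrm{mod}\ N),\ \gcd(\ell,Nm)=1\}$ and proves $|S_N(m)|=\varphi(Nm)/\varphi(N)$ by an induction on $\gcd(N,m)$ using a non-coprime Chinese remainder theorem. For the factor $N$, you compute the stabilizer of $\bf{e}_1$ in $\SL_2(\ZZ/N\ZZ)$ (via the trivial stabilizer of $M_a$ in $\SL_2(\ZZ)$ and the surjection $\SL_2(\ZZ)\twoheadrightarrow\SL_2(\ZZ/N\ZZ)$), whereas the paper obtains it from the index identity $[\SL_2(\ZZ):\Gamma(N)]/\#\mathcal P_N=N$. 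Both routes are valid; yours is shorter and more conceptual, while the paper's inductive computation of $|S_N(m)|$ is more hands-on but not strictly necessary.
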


It is worth noting that when $N=2$, stronger results are available:
\[
D_n^{(\sigma_1, \sigma_2)}\neq \emptyset\; \text{if and only if}\; \left\{\begin{array}{ll}
\sigma_1=\sigma_2  &\text{when}\;n\in 2\ZZ-\{0\}; \\
\sigma_1\neq \sigma_2  &\text{when}\;n\in 2\ZZ+1. \\
\end{array}\right.
\]

For any $\sigma\in \mathcal P_N$, if $(\bf{v}_1, \bf{v}_2)\in D_n^{(\sigma)}$, then $\bf{v}_1\equiv \bf{v}_2$ modulo $N$ so that $n=\det(\bf{v}_1, \bf{v}_2)\in N\ZZ$, which proves the first assertion of Proposition~\ref{pr:no.inv}. For the second assertion, we need the following lemma.

\begin{lemma}[Chinese remainder theorem (non-coprime version)]\label{le:CRT}
For $a,b,m,n\in\mathbb Z$ such that $a\equiv b \pmod{\mathrm{gcd}(m,n)}$, the system
$$\begin{cases}x\equiv a\pmod{m},\\
x\equiv b\pmod{n}\end{cases}$$
has a solution, which is unique up to being modulo $\mathrm{lcm}(m,n)$.
\end{lemma}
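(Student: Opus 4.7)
The plan is to split the claim into existence and uniqueness, and reduce each to a one-variable problem. For existence I would seek a solution of the form $x = a + km$, which automatically handles the first congruence and reduces the second one to a single linear congruence $km \equiv b - a \pmod{n}$ in the unknown $k$. For uniqueness I would use the standard fact that an integer divisible by both $m$ and $n$ is divisible by $\mathrm{lcm}(m,n)$.

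To produce such a $k$, set $d = \gcd(m,n)$; by B\'ezout there exist $s,t \in \mathbb{Z}$ with $sm + tn = d$. The hypothesis $a \equiv b \pmod{d}$ gives $b - a = de$ for some $e \in \mathbb{Z}$, and then $k := se$ works, since $km = se\cdot m = e(d - tn) \equiv ed = b - a \pmod{n}$. Thus $x_0 := a + sem$ solves the system. Uniqueness is then immediate: if $x_1, x_2$ both satisfy the system, then $x_1 - x_2$ is a common multiple of $m$ and $n$, hence a multiple of $\mathrm{lcm}(m,n)$; conversely, translating any solution by a multiple of $\mathrm{lcm}(m,n)$ yields another solution. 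So the solution set is a single residue class modulo $\mathrm{lcm}(m,n)$.

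The main obstacle is essentially nil: this is the standard non-coprime generalization of the classical CRT, and the compatibility hypothesis $a \equiv b \pmod{\gcd(m,n)}$ is precisely what B\'ezout requires in order for the auxiliary linear congruence $km \equiv b-a \pmod{n}$ to be solvable. The only point worth stating carefully in the write-up is the bookkeeping that shows the chosen $x_0$ satisfies \emph{both} congruences simultaneously, which is just the computation displayed above.
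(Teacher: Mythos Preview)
Your proof is correct and is the standard argument. The paper itself states this lemma without proof, treating it as a well-known number-theoretic fact, so there is nothing to compare against; your write-up would simply supply the omitted (routine) details.
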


\begin{proof}[Proof of Proposition~\ref{pr:no.inv}]
We already showed the first assertion of the proposition.
Fix  $n\in N\mathbb Z-\{0\}$. It follows from \cite{Schmidt1960} that
$$\left\{(\mathbf v_1,\mathbf v_2)\in P(\mathbb Z^2)\times P(\mathbb Z^2):\mathrm{det}(\mathbf v_1,\mathbf v_2)=n\right\}=\bigsqcup_{\substack{1\le\ell\le n\\ \mathrm{gcd}(\ell,n)=1}}\mathrm{SL}_2(\mathbb Z).\begin{pmatrix}1&\ell\\0&n\end{pmatrix}.$$
Denote by \{$\gamma_i : 1\le i\le [\SL_2(\ZZ):\Gamma(N)]\}$ the set of representatives of $\Gamma(N)\backslash\mathrm{SL}_2(\mathbb Z)$, i.e.
$$\mathrm{SL}_2(\mathbb Z) = \bigsqcup_{i=1}^{[\SL_2(\ZZ):\Gamma(N)]}\Gamma(N).\gamma_i.$$

Hence for each $\sigma\in \mathcal P_N$,
\[
D^{(\sigma)}_n=\left\{(\mathbf v_1,\mathbf v_2)\in P^{(\sigma)}\times P^{(\sigma)}:\mathrm{det}(\mathbf v_1,\mathbf v_2)=n\right\}=\bigsqcup \Gamma(N).\gamma_i\begin{pmatrix}1&\ell\\0&n\end{pmatrix},
\]
where the (disjoint) union is taken over those $\gamma_i\begin{pmatrix}1&\ell\\0&n\end{pmatrix}$ whose first and second columns are contained in the same congruence class $\sigma$ and with $\gcd(\ell, n)=1$.
For each $\sigma\in \mathcal P_N$, the number of $\gamma_i$ whose first column vector lies in $P^{(\sigma)}$ is
\begin{equation*}\label{eqn 1:no.inv}
[\SL_2(\ZZ):\Gamma(N)]/\#\mathcal P_N=N.
\end{equation*} 

Moreover, if we let $\gamma_i = \begin{pmatrix}a&c\\b&d\end{pmatrix},$ then 
\[
\gamma_i\begin{pmatrix}1&\ell\\0&n\end{pmatrix}=\begin{pmatrix}a&a\ell+cn\\b&b\ell+dn\end{pmatrix}\equiv\begin{pmatrix}a&a\ell\\b&b\ell\end{pmatrix}\pmod{N},
\]
thus the above matrix is in $D^{(\sigma)}_n$ if and only if its first and second columns are contained in the same congruence class $\sigma$.



Denote $n=Nm$ for some $m\in \ZZ-\{0\}$. We claim that
\begin{equation*}
\begin{pmatrix}a\ell\\b\ell\end{pmatrix}\equiv \begin{pmatrix}a\\ b\end{pmatrix}\pmod{N}\quad\text{ if and only if }\quad\ell= kN+1\text{ for some }1\le k\le m-1,
\end{equation*}
where the reverse direction is trivial.

Suppose that $a(\ell-1)\equiv b(\ell-1)\equiv 0\pmod{N}$.
If $N\nmid\ell-1$, there exists a divisor $p\not=1$ of $N$ such that $p$ divides both $a$ and $b$, which contradicts to $\mathrm{gcd}(a,b)=1$.
Thus we have $N|\ell-1$. Since $1\le\ell\le Nm$, we have $0\le k \le m-1$.

Set
\begin{equation*}\label{eq:SNm1}
S_N(m):=\{1\le \ell\le Nm: \ell\equiv 1\ (\mathrm{mod} \ {N}),~\mathrm{gcd}(\ell,Nm)=1\}
\end{equation*}
(the second condition comes from the fact that $(\ell, n)$ is a primitive vector).
Hence the number of $\Gamma(N)$-invariant irreducible sets in $D^{(\sigma)}_n$ is 
\begin{equation*}\label{eqn 2:no.inv}
\frac{[\mathrm{SL}_2(\mathbb Z):\Gamma(N)]}{\# \mathcal P_N}\cdot|S_N(m)|=N\cdot|S_N(m)|.
\end{equation*}

Now, let us show that
\begin{equation}\label{eq:no.inv}
|S_N(m)|=\frac{\varphi(Nm)}{\varphi(N)}=\frac{\varphi(n)}{\varphi(N)}.
\end{equation}

Under the assumption that $\ell\equiv 1\pmod{N}$, the condition
$\mathrm{gcd}(\ell,Nm)=1$ is equivalent to the condition $\mathrm{gcd}(\ell,m)=1$
since $\mathrm{gcd}(\ell,N)=1$.
Thus
$$S_N(m) = \{1\le \ell \le Nm: \ell\equiv 1\ (\mathrm{mod} \ {N}),~\mathrm{gcd}(\ell,m)=1\}.$$

We will achieve \eqref{eq:no.inv} by induction on $d := \mathrm{gcd}(N,m)$, starting with two base cases: i) $d=1$ and ii) $d=m$.

\vspace{0.1in}
\noindent i)
Suppose that $d=1$.

If $m=1$, then $S_N(1) = \{1\}$ and automatically \eqref{eq:no.inv} holds.

Suppose $m\not=1$. Consider the projection map
\[
\ZZ/n\ZZ \rightarrow \ZZ/N\ZZ \times \ZZ/m\ZZ:\;\ell \mapsto (\ell \;\mod N, \ell \;\mod m)=(x_\ell,y_\ell).
\]
Then if $\ell\in S_N(m)$, then $x_\ell=1$ and $\gcd(y_\ell, m)=1$. 
On the other hand, it follows from the Chinese remainder theorem that for any integer $y$, the system
\begin{equation*}\begin{cases}
\ell\equiv 1 \pmod N,\\
\ell\equiv y \pmod m
\end{cases}\end{equation*}
has a solution $\ell$ which is unique up to being modulo $n=Nm$.
Thus
$$|S_N(m)|=\#(\mathbb Z/m\mathbb Z)^\times=\varphi(m).$$
By our assumption that $d=\gcd(N,m)=1$, we conclude that
$$\frac{\varphi(Nm)}{\varphi(N)} = \frac{\varphi(N)\varphi(m)}{\varphi(N)}=\varphi(m)=|S_N(m)|.$$

\vspace{0.1in}
\noindent ii) $d=m$, i.e., $m|N$. 

Note that $\ell\equiv 1 \pmod{N}$ implies $\mathrm{gcd}(\ell,N)=1$.
Thus we have $\mathrm{gcd}(\ell,Nm)=1$ and hence $|S_N(m)|=m$.
It follows that
$$\frac{\varphi(Nm)}{\varphi(N)}= \frac {\varphi(N)\varphi(m)\gcd(N,m)}{\varphi(N)\varphi(\gcd(N,m))}= \frac{\varphi(N)\varphi(m)m}{\varphi(N)\varphi(m)} = m=|S_N(m)|.$$

\vspace{0.1in}
\noindent iii) (induction step) Now, let us assume that $\mathrm{gcd}(N,m)=d$ with $1<d<m$.

Set $m'=m/d$. Define the modulo $m'$ map 
\begin{equation*}
\begin{matrix}
\psi:& S_N(m) & \to & S_d(m')=\{1\le \ell\le dm': \ell\equiv 1 \;(\mathrm{mod}\; d),\;\gcd(\ell, m')=1\} \\[0.05in]
& \ell & \mapsto & \hspace{-2.9in}\ell \mathrm{~mod~} m.
\end{matrix}
\end{equation*}
To verify that $\psi$ is well-defined, let $\ell=jN+1\in S_N(m)$ for some $0\le j\le m-1$.
Since $d|N$, it is obvious that $\ell\equiv 1$ modulo $d$. And from $\gcd(\ell, m)=1$ together with $\gcd(\ell, d)=1$, we obtain the second condition $\gcd(\ell, m')=1$.


We claim that $\psi$ is a $d$-to-1 surjective map. Indeed, Chinese remainder theorem (Lemma~\ref{le:CRT}) tells us that for  any $u \in S_d(m')$, the following system
\begin{equation*}
\begin{cases}
\ell\equiv u\pmod m,\\
\ell\equiv 1\pmod N
\end{cases}
\end{equation*}
has a solution, unique up to being modulo $\mathrm{lcm}(m,N)={mN}/{d}$.
Apparently, such a solution $\ell\in S_N(m)$ satisfies that $\psi(\ell)=u$ which shows the surjectivity of $\psi$.

Notice that we reduce the problem counting $|S_N(m)|$ to counting $|S_d(m')|$, where $1\le m'<m$. Repeating this process, we eventually end up in finite steps to the base case, either i) or ii). 
Therefore, we can apply an induction hypothesis which asserts that 
$$|S_d(m')|=\frac{\varphi (dm')}{\varphi(d)}=\frac{\varphi(m)}{\varphi(d)},$$
thus
$|S_N(m)|={d\varphi(m)}/{\varphi(d)}.$
Consequently, we achieve the claim \eqref{eq:no.inv} by observing
$$\frac{\varphi(n)}{\varphi(N)} = \frac{\varphi(Nm)}{\varphi(N)}=\frac{\varphi(N)\varphi(m)d}{\varphi(N)\varphi(d)} = \frac{\varphi(m)d}{\varphi(d)}=|S_N(m)|.$$
\end{proof}


We now ready to prove Theorem~\ref{thm: second moment 2}.
\begin{proof}[Proof of Theorem~\ref{thm: second moment 2}]
By \Cref{pr:no.inv}, for each $\sigma\in \mathcal P_N$, one can decompose $P^{(\sigma)}\times P^{(\sigma)}$ into disjoint $\Gamma(N)$-invariant sets
\[
P^{(\sigma)}\times P^{(\sigma)}=\left\{({\bf v}, {\bf v}): {\bf v}\in P^{(\sigma)}\right\}\sqcup \left\{({\bf v}, -{\bf v}): {\bf v}\in P^{(\sigma)}\right\}\sqcup \bigsqcup_{n\in N\ZZ-\{0\}} D^{(\sigma)}_n,
\]
where $D^{(\sigma)}_n$ is defined as in \eqref{def: admissible det}. One can apply Fubini lemma so that
\begin{align}
&\int_{\SL_2(\RR)/\Gamma(N)} \Siegel{F}{\sigma}(g\Gamma(N)) d\nu_N (g)
=\int_{\SL_2(\RR)/\Gamma(N)} \sum_{
\bf v_1, \bf v_2\in P^{(\sigma)}
} F(g{\bf v}_1, g{\bf v}_2) d\nu_N(g) \nonumber\\
&\hspace{0.4in}=\int_{\SL_2(\RR)/\Gamma(N)} \sum_{{\bf v}\in P^{(\sigma)}} F(g{\bf v}, g{\bf v}) d\nu_N(g)+\int_{\SL_2(\RR)/\Gamma(N)} \sum_{{\bf v}\in P^{(\sigma)}} F(g{\bf v}, -g{\bf v}) d\nu_N(g) \label{dependent}
\\
&\hspace{0.6in}+\sum_{n\in N\ZZ-\{0\}}\int_{\SL_2(\RR)/\Gamma(N)} \sum_{
({\bf v}_1, {\bf v}_2)\in
D^{(\sigma)}_n} F(g{\bf v}_1, g{\bf v}_2) d\nu_N(g). \label{independent}
\end{align}

Applying the first moment formula (Theorem~\ref{thm: moment formulas} (1)) to bounded and compactly supported functions
\begin{equation}\label{eq:F.fun}
{\bf x} \mapsto F({\bf x}, {\bf x})
\quad\text{and}\quad
{\bf x} \mapsto F({\bf x}. -{\bf x}) 
\end{equation}
on $\RR^2$, it follows that
\[
\eqref{dependent}=\frac 1 {\zeta_N(2)N^2} \int_{\RR^2} F({\bf v}, {\bf v}) d{\bf v}
+\frac 1 {\zeta_N(2)N^2} \int_{\RR^2} F({\bf v}, -{\bf v}) d{\bf v}.
\] 

For integrals in \eqref{independent}, it follows from \Cref{pr:no.inv} that for each $n\in N\ZZ-\{0\}$, $D^{(\sigma)}_n$ consists of $N\varphi(n)/\varphi(N)$ number of $\Gamma(N)$-orbits. Denote by $D^{(\sigma)}_n=\bigsqcup_{i=1}^{N\varphi(n)/\varphi(N)} \Gamma(N) X_i$, where each $X_i\in \ZZ^2\times \ZZ^2$ with $\det X_i=n$.

Applying the folding-unfolding argument,
\[\begin{split}
\int_{\SL_2(\RR)/\Gamma(N)} \sum_{
({\bf v}_1, {\bf v}_2)\in
D^{(\sigma)}_n} F(g{\bf v}_1, g{\bf v}_2) d\nu_N(g)
&=\sum_{i=1}^{N\frac{\varphi(n)}{\varphi(N)}} \int_{\SL_2(\RR)/\Gamma(N)} \sum_{
({\bf v}_1, {\bf v}_2)\in
\Gamma(N)X_i} F(g{\bf v}_1, g{\bf v}_2) d\nu_N(g)\\
&=\sum_{i=1}^{N\frac{\varphi(n)}{\varphi(N)}} \int_{\SL_2(\RR)} F(gX_i) d\nu_N(g).
\end{split}\]

Thus the result follows from the change of variables $gX_i\mapsto gJ_n$ ($\nu_N$: unimodular) and the relation $\nu_N=1/(\zeta(2)N^3\prod_{p|N} (1-\frac 1 {p^2}))\eta=1/(\zeta_N(2)N^3)\eta$.
\end{proof}

\section{Moment Formulas over the Cone}
In this section, we fix a fundamental domain $\mathfrak F_N$ of $\SL_2(\RR)/\Gamma(N)$. Note that $\mathfrak F_N$ can be considered as the subset of $\RR^2\times \RR^2$. 
We define the cone $\mathfrak C=\mathfrak C_{\mathfrak F_N}$ of $\SL_2(\RR)/\Gamma$ by the embedded image of the map
\[
(t, g) \in (0,1]\times \mathfrak F_N \mapsto t^{1/2}g\in \RR^2\times \RR^2
\]
and assign the product measure $\mathrm{Leb}|_{(0,1]}\times \nu_N$.

Note that the Siegel transform $\Siegel{f}{\sigma}$ and $\Siegel{F}{\sigma}$ can be extended to the cone $\mathfrak C$. However, in this setting, it is more convenient to work with a normalized version of the Siegel transform, defined as follows.
\[\begin{gathered}
t\Siegel{f}{\sigma}(t^{1/2}g):=t\sum_{{\bf v}\in P^{(\sigma)}} f(t^{1/2}g{\bf v})\\
t^2\Siegel{F}{\sigma}(t^{1/2}g):=t^2\hspace{-0.15in}\sum_{\substack{
({\bf v}_1, {\bf v}_2)\\
\in P^{(\sigma)}\times P^{(\sigma)}}} 
\hspace{-0.15in}
F(t^{1/2}g{\bf v}_1, t^{1/2}g{\bf v}_2)
\end{gathered}\]
for $(t,g)\in (0,1]\times \mathfrak F_N$.

\begin{theorem}\label{thm: First moment on C}
Let $f:\RR^2 \rightarrow \RR$ be a bounded and compactly supported function.
For $P^{(\sigma)}\in \mathcal P_N$, the function $t^{1/2}g\in \mathfrak C\mapsto t\widehat{f}^\sigma(t^{1/2}g)$ is integrable and we have the integral formula
\[
\int_{\mathfrak C} t\Siegel{f}{\sigma} (t^{1/2}g) d\nu_N dt
=\frac 1 {\zeta_N(2)N^2} \int_{\RR^2} f({\bf v}) d{\bf v}.
\]
\end{theorem}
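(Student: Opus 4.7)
The plan is to reduce the integral over the cone $\mathfrak C$ to the first moment formula on $\SL_2(\RR)/\Gamma(N)$ (Theorem~\ref{thm: moment formulas}(1)), by slicing in $t$ and absorbing the scaling into the test function.

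First, for each fixed $t\in(0,1]$ define the rescaled test function $h_t:\RR^2\to\RR$ by $h_t(\bf{v}):=f(t^{1/2}\bf{v})$. Since $f$ is bounded with compact support, so is $h_t$, and the identity
\[
t\,\widehat{f}^{(\sigma)}(t^{1/2}g)=t\sum_{\bf{v}\in P^{(\sigma)}} f(t^{1/2}g\bf{v})=t\,\widehat{h_t}^{(\sigma)}(g\Gamma(N))
\]
holds for every $g\in\mathfrak F_N$. Applying Theorem~\ref{thm: moment formulas}(1) to $h_t$ gives
\[
\int_{\SL_2(\RR)/\Gamma(N)}\widehat{h_t}^{(\sigma)}(g\Gamma(N))\,d\nu_N(g)=\frac{1}{\zeta_N(2)N^2}\int_{\RR^2} f(t^{1/2}\bf{v})\,d\bf{v}=\frac{1}{\zeta_N(2)N^2\,t}\int_{\RR^2} f(\bf{u})\,d\bf{u},
\]
where the last equality is the change of variables $\bf{u}=t^{1/2}\bf{v}$ (Jacobian $t$ on $\RR^2$). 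Multiplying by $t$ cancels the factor $t^{-1}$ on the right.

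Next, to justify integrability and Fubini's theorem, I would repeat the argument above with $|f|$ in place of $f$: the first moment formula applies to $|h_t|=|f|(t^{1/2}\cdot)$ and yields
\[
\int_0^1\!\int_{\mathfrak F_N} t\,\widehat{|f|}^{(\sigma)}(t^{1/2}g)\,d\nu_N(g)\,dt=\frac{1}{\zeta_N(2)N^2}\int_{\RR^2}|f(\bf{u})|\,d\bf{u}<\infty,
\]
so the non-negative integrand is in $L^1$ and $t\,\widehat f^{(\sigma)}(t^{1/2}g)$ is absolutely integrable over $\mathfrak C$. With Fubini now licensed, integrating the displayed identity over $t\in(0,1]$ produces
\[
\int_{\mathfrak C} t\,\widehat{f}^{(\sigma)}(t^{1/2}g)\,d\nu_N\,dt=\int_0^1\frac{1}{\zeta_N(2)N^2}\int_{\RR^2} f(\bf{u})\,d\bf{u}\,dt=\frac{1}{\zeta_N(2)N^2}\int_{\RR^2}f(\bf{u})\,d\bf{u},
\]
which is the desired formula.

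There is no substantial obstacle here; this really is a one-parameter family of applications of the already established first moment formula followed by a Fubini. The only subtlety worth being explicit about is the sign convention of the Jacobian in the substitution $\bf{u}=t^{1/2}\bf{v}$ on $\RR^2$ (factor $t$, not $t^{1/2}$), which is exactly what cancels against the weight $t$ and leaves the right-hand side independent of $t$ so that the outer integral over $(0,1]$ is trivial.
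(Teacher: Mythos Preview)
Your proof is correct and follows essentially the same route as the paper: slice in $t$, apply the first moment formula on $\SL_2(\RR)/\Gamma(N)$ to the rescaled function $f_t(\bf v)=f(t^{1/2}\bf v)$, and use the Jacobian factor $t^{-1}$ to cancel the weight. The only cosmetic difference is that the paper handles integrability by decomposing $f=f_+-f_-$ and invoking Tonelli, whereas you pass to $|f|$; both are equivalent standard justifications.
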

\begin{proof} 
We will assume that $f$ is a non-negative function. The formula for a general function $f$ can be easily obtained since one can decompose $f=f_{+}-f_{-}$ by two non-negative, bounded, and compactly supported functions $f_+:=\max\{f, 0\}$ and $f_-:=\max\{-f, 0\}$.
Applying Tonelli's theorem,
\[
\int_{\mathfrak C} t\Siegel{f}{\sigma} (t^{1/2}g) d\nu_N dt=\int_0^1 t\int_{\SL_2(\RR)/\Gamma(N)}
\widehat{f_t}^{(\sigma)}(g)\nu_Ndt,
\]
where $f_t(\bf{v}):=f(t^{1/2}\bf{v})$.
The formula as well as the integrability follows directly from applying \Cref{thm: moment formulas} (1) in the inner integral, with the fact that
\begin{equation}\label{eq:ch.v}
\int_{\mathbb R^2} f(t^{1/2}\mathbf v)d\mathbf v = \int_{\mathbb{R}^2}t^{-1}f(\mathbf w)d\mathbf w.
\end{equation}
\end{proof}

\begin{theorem}\label{thm: Second moment on C}
For $N\in \NN_{\ge 2}$, define a function $\Phi_N$ on $\RR$ as
\begin{equation}\label{def: Phi_N}
\Phi_N(x)=\:|x|\sum_{\substack{
n\in N\NN,\\
n\ge |x|}} \frac {N\varphi(n)} {\varphi(N)n^3}.
\end{equation}
Let $F:\RR^2\times \RR^2 \rightarrow \RR$ be a bounded and compactly supported function.
For $P^{(\sigma)}\in \mathcal P_N$, we have the integral formula
\begin{equation}\label{eq 1: Cone second moment}\begin{split}
\int_{\mathfrak C} t^2\Siegel{F}{\sigma} (t^{1/2}g) d\nu_N dt
&=\frac 1 {\zeta_N(2)N^3} \int_{\RR^2} \Phi_N(\det({\bf v}_1, {\bf v}_2)) F({\bf v}_1, {\bf v}_2) d{\bf v}_1 d{\bf v}_2\\
&+\frac 1 {2\zeta_N(2)N^2} \int_{\RR^2} F({\bf v}, {\bf v}) + F({\bf v}, -{\bf v}) d{\bf v}.
\end{split}\end{equation}
\end{theorem}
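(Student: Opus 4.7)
The plan is to reduce the identity to the second moment formula on $\SL_2(\RR)/\Gamma(N)$ (Theorem~\ref{thm: second moment 2}) via Tonelli's theorem, and then to convert each resulting $\SL_2(\RR)$-integral against $d\eta$ into a Lebesgue integral on $\RR^2\times \RR^2$ by an explicit change of variable $M = t^{1/2}gJ_n$. By splitting $F = F_+ - F_-$ into non-negative and non-positive parts as in the proof of Theorem~\ref{thm: First moment on C}, I may assume $F \ge 0$ throughout so that Tonelli applies freely.

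The first step is to rewrite the left-hand side of \eqref{eq 1: Cone second moment} as
\[
\int_0^1 t^2 \int_{\SL_2(\RR)/\Gamma(N)} \widehat{F_t}^{(\sigma)}(g)\,d\nu_N(g)\,dt,
\]
where $F_t({\bf v}_1,{\bf v}_2) := F(t^{1/2}{\bf v}_1, t^{1/2}{\bf v}_2)$, so that $t^2\widehat{F}^{(\sigma)}(t^{1/2}g) = t^2 \widehat{F_t}^{(\sigma)}(g)$. Applying Theorem~\ref{thm: second moment 2} to $F_t$ decomposes the inner integral into two diagonal terms and an off-diagonal sum indexed by $n \in N\ZZ - \{0\}$. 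For the diagonal pieces, the rescaling $\int F_t({\bf v}, \pm {\bf v})\,d{\bf v} = t^{-1} \int F({\bf w}, \pm {\bf w})\,d{\bf w}$ (an application of \eqref{eq:ch.v}) combined with $\int_0^1 t\,dt = 1/2$ delivers precisely the second line on the right-hand side of \eqref{eq 1: Cone second moment}.

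The main task is the off-diagonal contribution. For each fixed $n \in N\ZZ - \{0\}$, I would recast
\[
\int_0^1 t^2 \int_{\SL_2(\RR)} F(t^{1/2}gJ_n)\,d\eta(g)\,dt
\]
as a Lebesgue integral over $M_2(\RR) = \RR^2\times\RR^2$ via the disintegration $d^4 M = ds\,d\mu_s$, where $d\mu_s$ is the Lebesgue measure on the hypersurface $\{\det M = s\}$. A short Jacobian computation (in the coordinates $(M_{11}, M_{12}, M_{21})$ with $M_{22}$ eliminated via the determinant relation) shows that $g \mapsto gJ_n$ pushes $d\eta$ forward to $|n|^{-1} d\mu_n$, and that the scaling $M \mapsto t^{1/2}M$ pulls $d\mu_{tn}$ back to $t\, d\mu_n$. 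Substituting $s = tn$, with care for the sign of $n$ in the endpoints of integration, then yields
\[
\int_0^1 t^2 \int_{\SL_2(\RR)} F(t^{1/2}gJ_n)\,d\eta\,dt = \frac{1}{|n|^3}\int_{I_n} |s| \int_{\{\det M = s\}} F(M)\,d\mu_s(M)\,ds,
\]
where $I_n = (0,n]$ if $n > 0$ and $I_n = [n,0)$ if $n < 0$.

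Finally, pairing the $\pm n$ contributions for each $m = |n| \in N\NN$ (using $\varphi(n) = \varphi(|n|)$) merges $I_m$ and $I_{-m}$ into $[-m,m]$, and swapping the now-positive sum over $m$ with the $s$-integration by Tonelli identifies the weight as
\[
\sum_{\substack{m \in N\NN \\ m \ge |s|}} \frac{\varphi(m)}{m^3} = \frac{\varphi(N)}{N|s|}\,\Phi_N(s)
\]
directly from definition~\eqref{def: Phi_N}. The factor $|s|$ cancels and reassembling the $s$-integration into a Lebesgue integral on $\RR^2\times\RR^2$ via $d^4 M = ds\,d\mu_s$ produces the first term on the right-hand side of \eqref{eq 1: Cone second moment}. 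I expect the most delicate part to be this final bookkeeping: tracking the constants $N$, $\varphi(N)$, $\zeta_N(2)$ through the substitution and pairing $\pm n$ correctly so that the symmetric cutoff $\{m \ge |s|\}$ defining $\Phi_N$ emerges cleanly.
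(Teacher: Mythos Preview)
Your proposal is correct and follows the paper's proof essentially step for step: reduce to $F\ge 0$, apply Tonelli and Theorem~\ref{thm: second moment 2} to $F_t$, dispatch the diagonal terms via the scaling identity and $\int_0^1 t\,dt=1/2$, and convert each off-diagonal $\SL_2(\RR)$-integral into a Lebesgue integral on $\RR^2\times\RR^2$ after the substitution $s=tn$, then swap the sum over $n$ with the $s$-integral to produce $\Phi_N$. The only cosmetic difference is that the paper packages the off-diagonal change of variables via the unimodular substitution $g'=g\,\diag(t^{1/2},t^{-1/2})$, so that $t^{1/2}gJ_n=g'\diag(1,tn)$ in one stroke, and then passes from $(x,g')$-coordinates to $(\bf{v}_1,\bf{v}_2)$ with Jacobian $d\eta\,dx=|\det(\bf{v}_1,\bf{v}_2)|^{-1}d\bf{v}_1 d\bf{v}_2$; your two-step disintegration (push $\eta$ forward by $J_n$, then rescale by $t^{1/2}$) is the same Jacobian computation in different clothing.
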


\begin{proof}
As in the proof of \Cref{thm: First moment on C}, we will assume that $F$ is a non-negative function. 
By Tonelli's theorem, and applying \Cref{thm: moment formulas} (2) to $F_t(\bf{v}_1,\bf{v}_2):=F(t^{1/2}\bf{v}_1, t^{1/2}\bf{v}_2)$,
it holds that
\[\begin{split}
\int_{\mathfrak C} t^2 \Siegel{F}{\sigma} (t^{1/2}g)d\nu_N dt
&=\int_0^1 t^2 \left[
\sum_{n\in N\ZZ-\{0\}} \frac {\varphi(n)}{\zeta_N(2)N^2\varphi(N)}\int_{\SL_2(\RR)} F_t(gJ_n) d\eta(g)\right.\\
&\left. \hspace{1in}+\frac 1 {\zeta_N(2)N^2} \int_{\RR^2} F_t(\bf{v}, \bf{v}) + F_t(\bf{v}, \bf{-v})d\bf{v}\right]dt.
\end{split}\]
By applying \eqref{eq:ch.v} to the functions in \eqref{eq:F.fun},
the integral in the second line is equal to the integral in the second line above in \eqref{eq 1: Cone second moment}.
Hence let us concentrate on the integral in the first line of (R.H.S).

For $t\in (0,1]$ and $n\in N\ZZ-\{0\}$, consider the change of variables $g'=gh^{-1}_t$, where $h_t=\diag(t^{-1/2}, t^{1/2})$ so that 
\[
t^{1/2}gJ_n=g'\left(\begin{array}{cc}
1 & 0\\
0 & tn \end{array}\right).
\] 
Then putting $x=tn$, we deduce that the above first integral in (R.H.S) is equal to
\begin{equation}\label{eqn: 2nd moment first term}
\frac 1 {\zeta_N(2) N^2\varphi(N)} \sum_{n\in N\ZZ-\{0\}} \frac {\varphi(n)} { |n|^3}
\int_{n(0,1]} x^2 \int_{\SL_2(\RR)} F\left(g\left(\begin{array}{cc}
1 & 0 \\
0 & x\end{array}\right)\right) d\eta(g)dx.
\end{equation}
Here, $n(0,1]=(0,n]$ if $n>0$, and $[n,0)$ if $n<0$.

On the domain $n(0,1]\times \SL_2(\RR)$, we change variables
\[
g\left(\begin{array}{cc}
1 & 0 \\
0 & x \end{array}\right)=\left(\begin{array}{cc}
1 & 0 \\
c & 1 \end{array}\right)\left(\begin{array}{cc}
a & b \\
0 & a^{-1} \end{array}\right)\left(\begin{array}{cc}
1 & 0 \\
0 & x \end{array}\right)=(\bf{v}_1, \bf{v}_2)
\]
so that $$d\eta(g)dx=dadbdcdx=\frac 1 {|\det(\bf{v}_1, \bf{v}_2)|}d\bf{v}_1d\bf{v_2}.$$

Finally, let us switch two integrals.
Notice that
\[
(\bf{v}_1, \bf{v}_2)\in n(0,1]\times \SL_2(\RR)
\quad\text{if and only if}\quad
\left\{\begin{array}{cl}
x=\det(\bf{v}_1, \bf{v}_2) \ge n &\text{if}\; x>0\;(\text{hence}\; n>0);\\[0.05in]
x=\det(\bf{v}_1, \bf{v}_2) \le n &\text{if}\; x<0,
\end{array}\right.
\]
which explains the subscript of the summation in $\Phi_N(x)$.
Therefore the integral \eqref{eqn: 2nd moment first term} is reduced to
\[
\frac 1 {\zeta_N(2)N^3} \int_{\RR^2\times \RR^2} \Phi_N(\det(\bf{v}_1, \bf{v}_2)) F(\bf{v_1}, \bf{v_2}) d\bf{v}_1 d\bf{v}_2
\]
and we obtain the result.
\end{proof}


For the purpose of counting lattice points, it is necessary to obtain an estimate for $\Phi_N(x)$.

\begin{proposition}\label{Prop: asym formula of Phi_N} Let $\Phi_N(x)$ be defined as in \eqref{def: Phi_N} for each $N\in \NN$. It holds that
\[\Phi_N(x)=\dfrac 1 {\zeta_N(2)N} + O_N(|x|^{-1}\log |x|)\]
(as $|x|\rightarrow \infty$).
\end{proposition}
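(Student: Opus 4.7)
The plan is to reduce the claim to an asymptotic for the tail
\[
T(K) := \sum_{\substack{n \geq K \\ N \mid n}} \frac{\varphi(n)}{n^3},
\]
since the definition \eqref{def: Phi_N} reads $\Phi_N(x) = (N|x|/\varphi(N))\,T(|x|)$. It therefore suffices to prove
\[
T(K) = \frac{\varphi(N)}{N^2\,\zeta_N(2)\,K} + O_N\!\left(\frac{\log K}{K^2}\right) \qquad \text{as } K \to \infty,
\]
after which multiplication by $N|x|/\varphi(N)$ with $K = |x|$ delivers the claimed estimate. I would establish this bound in two steps: first obtain a partial-sum asymptotic $A(y) := \sum_{n \leq y,\, N\mid n} \varphi(n) = C_N\, y^2 + O_N(y \log y)$, and then use Abel summation to convert it into the tail estimate for $T$.

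For the partial sum, I would apply the Möbius identity $\varphi(n) = \sum_{d\mid n} \mu(d)(n/d)$ and swap orders of summation. The joint condition ``$d\mid n$ and $N\mid n$'' is equivalent to $L_d \mid n$ with $L_d := \mathrm{lcm}(N,d) = Nd/\gcd(N,d)$, so writing $n = L_d m$ and using $\sum_{m \leq y/L_d} m = y^2/(2 L_d^2) + O(y/L_d)$ gives
\[
A(y) = \frac{y^2}{2N} \sum_{d \geq 1} \frac{\mu(d)\, \gcd(N,d)}{d^2} + O_N(y \log y),
\]
where the error term uses the identity $(N/\gcd(N,d))/L_d = 1/d$ together with $\sum_{d \leq y} 1/d = O(\log y)$, and where extending the main sum to infinity costs at most $O(y)$ (since $|\mu(d)\gcd(N,d)/d^2| \leq N/d^2$). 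The constant factors as an Euler product: a squarefree $d$ decomposes uniquely as $d_1 d_2$ with $d_1 \mid N$ and $\gcd(d_2, N) = 1$, whence $\gcd(N,d) = d_1$ and $\mu(d) = \mu(d_1)\mu(d_2)$, so
\[
\sum_{d \geq 1} \frac{\mu(d)\, \gcd(N,d)}{d^2} = \prod_{p\mid N}\!\left(1 - \frac{1}{p}\right) \cdot \prod_{p\nmid N}\!\left(1 - \frac{1}{p^2}\right) = \frac{\varphi(N)}{N\, \zeta_N(2)}.
\]
Hence $C_N = \varphi(N)/(2 N^2 \zeta_N(2))$.

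To pass from $A$ to $T$, I would apply Abel summation (equivalently, Riemann--Stieltjes integration by parts with $f(t) = t^{-3}$), obtaining
\[
T(K) = -\frac{A(K-1)}{K^3} + 3 \int_K^\infty \frac{A(t)}{t^4}\, dt.
\]
Inserting $A(t) = C_N t^2 + O_N(t \log t)$ yields $T(K) = -C_N/K + 3 C_N/K + O_N(\log K/K^2) = 2 C_N/K + O_N(\log K/K^2)$, and substituting $2 N C_N/\varphi(N) = 1/(N \zeta_N(2))$ concludes the proof.

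The argument is essentially routine elementary analytic number theory; the one mildly subtle point is the error bookkeeping in $A(y)$, namely verifying that the floor-function remainders $\lfloor y/L_d\rfloor(\lfloor y/L_d\rfloor + 1)/2 - y^2/(2L_d^2) = O(y/L_d)$, weighted by $\mu(d)(N/\gcd(N,d))$, combine across $d$ to contribute only $O_N(y \log y)$ rather than something larger. After that, the Abel summation step is mechanical.
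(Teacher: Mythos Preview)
Your proposal is correct and follows essentially the same approach as the paper: the paper likewise reduces the statement to Abel summation applied to the partial-sum estimate $\sum_{n\le K,\,N\mid n}\varphi(n)=C_N K^2+O_N(K\log K)$ (their Lemma~\ref{lem: estimation 1}), and proves that lemma by the same M\"obius expansion and reorganisation of the divisibility constraint. Your evaluation of the constant via the direct Euler-product factorisation $\sum_d \mu(d)\gcd(N,d)/d^2=\prod_{p\mid N}(1-1/p)\prod_{p\nmid N}(1-1/p^2)$ is slightly cleaner than the paper's route (which groups by $d_1=\gcd(N,d)$ and cites an external lemma for the inner sum), but the two computations are equivalent.
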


\begin{proof}
Since $\phi(N)=N\prod_{p|N} \left(1-\frac 1 p\right)$,
\[
\Phi_N(x)=\frac {N} {\varphi(N)} |x|\sum_{\substack{
n\in N\NN,\\
n\ge |x|}} \frac {\varphi(n)}{|n|^3}
=\prod_{p|N}\left(1-\frac 1 p\right)^{-1} |x|\sum_{\substack{
n\in N\NN,\\
n\ge |x|}} \frac {\varphi(n)}{|n|^3}.
\]
Hence it suffices to show that
\[
|x|\sum_{\substack{
n\in N\NN,\\
n\ge |x|}} \frac {\varphi(n)}{|n|^3}=\frac 1 {\zeta_N(2)N} \prod_{p|N} \left(1-\frac 1 p\right) + O(|x|^{-1}\log|x|).
\]
This is a direct consequence of Abel's summation theorem and Lemma~\ref{lem: estimation 1}.
\end{proof}

\begin{lemma}\label{lem: estimation 1} For any $K>1$,
\[
\sum_{\substack{
1\le n \le K,\\
n\in N\NN}} \varphi(m)=\left(\frac 1 {\zeta_N(2)N}\prod_{p|N} \left( 1- \frac 1 p\right)\right)\frac {K^2} 2 +O_N(K\log K).
\]
\end{lemma}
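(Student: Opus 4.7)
The plan is to reduce the sum to the classical problem of averaging Euler's totient, but restricted to the arithmetic progression $N\mathbb{N}$ and with the usual $1/\zeta(2)$ replaced by $1/\zeta_N(2)$. First I would substitute $n = N\ell$, so that the sum becomes $\sum_{\ell \le K/N}\varphi(N\ell)$. Using the formula $\varphi(m) = m\prod_{p \mid m}(1-1/p)$, one separates the primes dividing $N$ from the remaining primes dividing $\ell$ and obtains the clean identity
\[
\varphi(N\ell) = \varphi(N)\,\ell\!\prod_{\substack{p \mid \ell\\ p\nmid N}}\!\left(1-\tfrac{1}{p}\right),
\]
which isolates the $N$-dependence and reduces the problem to estimating $\sum_{\ell \le M} f(\ell)$ with $M = K/N$ and $f(\ell) := \ell\prod_{p\mid \ell,\, p\nmid N}(1-1/p)$.

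Next I would expand the Euler-type product via Möbius inversion, writing
\[
\prod_{\substack{p\mid \ell\\ p\nmid N}}\!\left(1-\tfrac{1}{p}\right) = \sum_{\substack{d\mid \ell\\ \gcd(d,N)=1}}\frac{\mu(d)}{d},
\]
so that $f(\ell) = \sum_{d\mid \ell,\,\gcd(d,N)=1} \mu(d)\,\ell/d$. Interchanging the order of summation and setting $\ell = d m$ gives
\[
\sum_{\ell \le M} f(\ell) \;=\; \sum_{\substack{d\le M\\ \gcd(d,N)=1}} \mu(d)\sum_{m\le M/d} m \;=\; \sum_{\substack{d\le M\\ \gcd(d,N)=1}} \mu(d)\!\left(\frac{M^2}{2d^2} + O\!\left(\frac{M}{d}\right)\right).
\]
The main term is $\tfrac{M^2}{2}\sum_{\gcd(d,N)=1}\mu(d)/d^2$, which by the Euler product equals $\tfrac{M^2}{2}\prod_{p\nmid N}(1-p^{-2}) = \tfrac{M^2}{2\zeta_N(2)}$; truncating the series at $M$ loses only $O(1/M)$, and the secondary $O(M/d)$ term sums to $O(M\log M)$.

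Assembling the pieces, multiplying by $\varphi(N) = N\prod_{p\mid N}(1-1/p)$ and substituting $M = K/N$ produces exactly the advertised constant $\tfrac{1}{\zeta_N(2)N}\prod_{p\mid N}(1-1/p)$ in front of $K^2/2$, while every error contributes $O_N(K\log K)$ (the dependence on $N$ is absorbed in the implied constant). The only delicate point—what I would treat as the main obstacle—is making sure that the restriction $p\nmid N$ in the Möbius expansion carries through the swap of summation so that the resulting Dirichlet series is $1/\zeta_N(2)$ and not $1/\zeta(2)$; once that identification is secured, the rest is a routine Dirichlet hyperbola-style computation requiring no new ideas beyond the bound $\sum_{d\le M} 1/d = O(\log M)$.
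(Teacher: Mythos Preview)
Your argument is correct and arrives at the same constant and error term as the paper, but the route is slightly different. The paper keeps the divisor sum unrestricted: it writes $\varphi(m)=m\sum_{d\mid m}\mu(d)/d$, swaps sums to get $\sum_{d\le K}\mu(d)\sum_{dd'\in N\NN,\,d'\le K/d} d'$, and then has to unpack the congruence condition $dd'\in N\NN$ by grouping $d$ according to $d_1=\gcd(d,N)$; only after the identity $\sum_{\gcd(d,N)=d_1}\mu(d)d_1/d^2=(\mu(d_1)/d_1)\sum_{\gcd(m,N)=1}\mu(m)/m^2$ does the series $1/\zeta_N(2)$ appear, and the extra factor $\sum_{d_1\mid N}\mu(d_1)/d_1=\prod_{p\mid N}(1-1/p)$ emerges from the sum over $d_1$. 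Your substitution $n=N\ell$ together with the clean factorisation $\varphi(N\ell)=\varphi(N)\,\ell\prod_{p\mid\ell,\,p\nmid N}(1-1/p)$ bakes the coprimality restriction $\gcd(d,N)=1$ into the M\"obius expansion from the outset, so the swap of summation immediately produces $\sum_{\gcd(d,N)=1}\mu(d)/d^2=1/\zeta_N(2)$ with no further case analysis, and the factor $\prod_{p\mid N}(1-1/p)$ is already sitting in $\varphi(N)$. The two computations are equivalent, but yours avoids the detour through the $d_1$-stratification and is correspondingly shorter.
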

\begin{proof}
Let us focus on how one can reserve the leading term. The error term for each step is easily obtained in a similar way with the proof of \cite[Lemma 4.1]{FH}.

Using the fact that $\varphi(m)=m\sum_{d|m} \mu(d)/d$ for any $m\in \NN$, where $\mu$ is the M\"obius function,
\[
\sum_{\substack{
1\le m \le K,\\
m\in N\NN}} \varphi(m)
=\sum_{\substack{
1\le m\le K,\\
m\in N\NN}} m \sum_{d|m} \frac {\mu(d)} {d}
=\sum_{1\le d\le K} \mu(d) \sum_{\substack{
1\le d'\le K/d,\\
dd'\in N\NN}} d'.
\]
Since $d'd\in N\NN$ and $d'\in \NN$ implies $d'\in \frac N{\gcd(N,d)}\NN$, 
\[\begin{split}
\sum_{\substack{
1\le d'\le K/d,\\
dd'\in N\NN}} d'
&=\sum_{k'=1}^{\lfloor K\gcd(N,d)/(dN)\rfloor} \frac N {\gcd(N,d)} k'\\
&=\frac N {\gcd(N,d)} \frac 1 2 \left(\left\lfloor \frac {K\gcd(N,d)} {Nd} \right\rfloor^2+\left\lfloor \frac {K\gcd(N,d)} {Nd} \right\rfloor\right),
\end{split}\]
where $\lfloor x\rfloor$ is the largest integer less than or equal to $x\in \RR$. 
Putting $d_1=\gcd(N,d)$, we obtain
\[\begin{split}
\sum_{\substack{
1\le m\le K,\\
m\in N\NN}} \varphi(m)
&=\sum_{1\le d \le K} \mu(d) \frac {K^2\gcd(N,d)} {2d^2N} + O(K\log K)\\
&=\frac {K^2} {2N} \sum_{d_1|N} \sum_{\substack{
1\le d\le K,\\
\gcd(d,N)=d_1}} \frac {\mu(d)} {d^2} d_1 + O(K\log K).
\end{split}\]

Now we claim that
\[
\sum_{\substack{
1\le d\le K,\\
\gcd(N,d)=d_1}} \frac {\mu(d)}{d^2}\: d_1
=\sum_{\substack{
1\le m \le K/d_1,\\
\gcd(m,N/d_1)=1}} \frac {\mu(d_1m)} {(d_1m)^2} \:d_1
=\frac {\mu(d_1)}{d_1} \sum_{\substack{
1\le m \le K/d_1,\\
\gcd(m,N)=1}} \frac {\mu(m)} {m^2}.
\]
Indeed, $\mu(d_1m)\neq 0$ only if $\gcd(m,N)=1$.
In this case, $\gcd(m,N/d_1)=1$ if and only if $\gcd(m, N)=1$. Moreover, it holds that $\mu(d_1m)=\mu(d_1)\mu(m)$.

According to the proof of \cite[Lemma 4.1]{FH} (by taking $S=\{\infty\}\cup\{\text{primes dividing }N\}$), it follows that for any $d_1$,
\[
\sum_{\substack{
1\le m\le K/d_1,\\
\gcd(m,N)=1}} \frac {\mu(m)} {m^2}
=\sum_{\substack{
m\in \NN,\\
\gcd(m,N)=1}}\frac {\mu(m)} {m^2} + O(K\log K)
=\frac 1 {\zeta_N(2)} + O(K\log K).
\]
Therefore, we have
\[\begin{split}
\sum_{\substack{
1\le m\le K,\\
m\in N\NN}} \varphi(m)
&=\frac {K^2} {2N} \sum_{d_1|N} \frac {\mu(d_1)}{d_1}\left(\frac 1 {\zeta_N(2)} +O(K\log K)\right)\\
&=\frac 1 {\zeta_N(2)N} \prod_{p|N} \left(1-\frac 1 p\right)\cdot \frac {K^2} 2 + O(K\log K).
\end{split}\]
\end{proof}
\section{Proofs of Counting Results}
As stated in the introduction, the main steps of the proof of Theorem~\ref{thm: Analog of Schmidt} are essentially parallel to those in Schmidt's original work \cite{Schmidt1960}, thus we do not reproduce the full argument here. We instead provide a sketch of the proof, focusing on the components that interact with the moment formulas presented earlier.

\begin{proof}[Sketch of the proof of Theorem~\ref{thm: Analog of Schmidt}.]
Let $\{A_T\}_{T>0}$ be an increasing family of Borel sets in $\RR^2$ with $V_T=\vol(A_T)\rightarrow \infty$ as $T\rightarrow \infty$. By re-indexing if necessary, we may further assume that $V_T=T$ for any $T>0$.

For each $m\in \NN$, define
\[
K_{m}=\left\{(M_1, M_2): 0\le M_1<M_2\le 2^m, \; M_1= u2^\ell,\; M_2=(u+1)2^{\ell}\;\text{for some}\; u,\ell\in \ZZ_{\ge 0} \right\}.
\]
We apply Borel--Cantelli lemma on the collection $\{\mathfrak B_m\}_{m\in \NN}$, where
\[
\mathfrak B_m=\left\{t^{1/2}g\in \mathfrak C: \left|t\widehat{\mathbbm 1}^{(\sigma)}_{A_M}(t^{1/2}g) - \frac {V_M}{\zeta_N(2)N^2}\right|\ge m^2 2^{m/2} \psi^{1/2}(m\log2 -1),\;\forall M\le 2^m \right\}.
\]

Indeed, one can show that
\begin{equation*}
\int_{\mathfrak B_m} 1 \:d \nu_N(g)dt\le \frac {c} {\psi(M\log2 -1)} 
\end{equation*}
for some $c>0$ (see \cite[Lemma 13]{Schmidt1960}), thus the summation of measures of $\mathfrak B_m$ converges under our assumption that $\int \psi^{-1}(x) dx<\infty$.
And one can obtain the above inequality directly from the following: there is a constant $c_1>0$ so that
\begin{equation}\label{eqn 1: sketch}
\sum_{(M_1, M_2)\in K_m} \int_{\mathfrak C} \left(t\widehat{\mathbbm 1}^{(\sigma)}_{A_{M_2}-A_{M_1}}(t^{1/2}g) - \frac {V_{M_2}-V_{M_1}} {\zeta_N(2)N^2}\right)^2 d\nu_N(g) dt< c_1m^3 2^m,
\end{equation}
together with observing that for any $M\le 2^m$, the interval $[0, M)$ can be expressed as the union of at most $2m$ intervals of the type $[M_1, M_2)$, where $(M_1, M_2)\in K_m$, and Cauchy--Schwarz inequality (\cite[Lemma12]{Schmidt1960}). Note that the function $\mathbbm 1_{A_M}$ can be written by a sum of at most $2M$ terms of the form $\mathbbm 1_{A_{M_2}-A_{M_1}}$, and likewise for $t\widehat{\mathbbm 1}^{(\sigma)}_{A_{M}}$ with terms $t\widehat{\mathbbm 1}^{(\sigma)}_{A_{M_2}-A_{M_1}}$.

Combining first moment formula (Theorem~\ref{thm: First moment on C}) and the second moment formula (Theorem~\ref{thm: Second moment on C}), it follows that for any $A\subseteq \RR^2$ (we will put $A=A_{M_2}-A_{M_1}$),
\[\begin{split}
\int_{\mathfrak C} \left(t\widehat{\mathbbm 1}^{(\sigma)}_A (t^{1/2}g) - \frac {\vol(A)}{\zeta_N(2)N^2}\right)^2 d\nu_N(g)dt
&\le
\frac 1 {2\zeta_N(2)N^2} \int_{\RR^2} \mathbbm 1_A(\bf{v})\left(\mathbbm 1_{A}(\bf{v})+\mathbbm 1_{A}(-\bf{v})\right) d\bf{v}\\
&\hspace{-1.4in}+\frac 1 {\zeta_N(2)N^3} \int_{\RR^2\times \RR^2} \left(\Phi_N(\det(\bf{v}_1, \bf{v}_2)) -\frac 1 {\zeta_N(2)N}\right)\mathbbm 1_{A}(\bf{v}_1) \mathbbm 1_{A}(\bf{v}_2) d\bf{v}_1 d\bf{v}_2.
\end{split}\]

Proposition~\ref{Prop: asym formula of Phi_N} yields that there are constants $c_2, \;c_3$ and $V_0>0$ such that if $V=\vol(A)>V_0$, one can take a non-negative and non-increasing function $\chi$ on $\RR_{>0}$ 
\[
\chi(x)=\left\{\begin{array}{cl}
c_2&\text{if}\; x<V_0/\log V_0;\\[0.05in]
c_3x^{-1}\log x&\text{if}\; V_0/\log V_0 \le x < V/\log V;\\[0.05in]
0&\text{if}\; x\ge V/\log V\\
\end{array}\right.\]
so that the following holds: $\int_{0}^\infty \chi(x)dx \le c_4 (\log V)^2$ for some $c_4>0$, thus there is $c_5>0$ such that 
\[
\frac 1 {\zeta_N(2)N^3} \int_{\RR^2\times \RR^2} \left(\Phi_N(\det(\bf{v}_1, \bf{v}_2)) -\frac 1 {\zeta_N(2)N}\right)\mathbbm 1_{A}(\bf{v}_1) \mathbbm 1_{A}(\bf{v}_2) d\bf{v}_1 d\bf{v}_2 \le c_5 V(\log V)^2
\]
(see the proof of \cite[Lemma 10 and Lemma 11]{Schmidt1960}).

Notice that the function $\chi$ depends on the Borel set $A$, specifically on the volume of $A$. 
Here, we use \cite[Theorem 3]{Schmidt1960} which states when $d=2$ that
\[
\int_{\RR^2\times \RR^2} \mathbbm 1_A (\bf{v}_1) \mathbbm 1_A(\bf{v}_2) \chi(|\det(\bf{v}_1, \bf{v}_2)|)
d\bf{v}_1 d\bf{v}_2 \le 8 \vol(A)\int_0^\infty \chi(x) dx.
\]

Hence, we obtain that
\[
\int_{\mathfrak C} \left(t\widehat{\mathbbm 1}^{(\sigma)}_A (t^{1/2}g) - \frac {\vol(A)}{\zeta_N(2)N^2}\right)^2 d\nu_N(g)dt \le c_6 V(\log V)^2.
\]
The upperbound \eqref{eqn 1: sketch} is deduced by summing all integrals for the sets $A=A_{M_2}-A_{M_1}$, where $(M_1, M_2)\in K_m$. Note that we use a decomposition of $K_m$, with possible repetitions, into the $m$-number of chains of pairs of $(M_1, M_2)$ as follows:
\[\begin{array}{c}
(0\cdot 2, 1\cdot 2), (1\cdot 2, 2\cdot 2), (2\cdot 2, 3\cdot 2), ..., ((2^{m-1}-1)\cdot 2, 2^{m-1}\cdot 2);\\
(0\cdot 2^2, 1\cdot 2^2), (1\cdot 2^2, 2\cdot 2^2), (2\cdot 2^2, 3\cdot 2^2), ..., ((2^{m-2}-1)\cdot 2^2, 2^{m-2}\cdot 2^2);\\
\vdots\\
(0\cdot 2^{m-1}, 1\cdot 2^{m-1}), (1\cdot 2^{m-1}, 2\cdot 2^{m-1}).
\end{array}\]
The partial sum over each chain in \eqref{eqn 1: sketch} is bounded by $c_6\vol(A_M)\log(\vol(A_M))^2\le c_6 2^{m} m^2$. 
\end{proof}

\begin{proof}[Proof of Theorem~\ref{thm: quantitative K}.] The key idea is to use Theorem~\ref{thm: Analog of Schmidt} with the family of Borel sets 
\[
\left\{A_T=\left\{(x,y)\in \RR^2: |x|\le \phi(|y|)\;\text{and}\; |y|<T\right\}\right\}_{T\in \RR_{>0}}.
\]
Moreover, we want to put $g=u_x=\left(\begin{array}{cc} 1 & x \\ 0 & 1\end{array}\right)$ for generic $x\in \RR$. For this reduction, we consider the small neighborhood of $\{u_x\in \SL_2(\RR): x\in \RR\}$ by thickening in direction 
\[\left\{\left(\begin{array}{cc} y & 0 \\ z & y^{-1}\end{array}\right): y\neq 0, \;z\in \RR \right\}.
\]

For the details, we refer the reader to \cite{AGY2021}. While their result is stated for $d\ge3$, due to the lack of moment formulas in dimension two, the proof of \cite[Theorem 1]{AGY2021} that we require here is not directly affected by this limitation.
\end{proof}



\end{document}